\newtheorem{theorem}{Theorem}[section]
\newtheorem{lemma}[theorem]{Lemma}
\newtheorem{prop}[theorem]{Proposition}
\newtheorem{corollary}[theorem]{Corollary}
\newtheorem{fact}[theorem]{Fact}
\theoremstyle{definition}
\newtheorem{definition}[theorem]{Definition}
\newtheorem{rem}[theorem]{Remark}
\newtheorem{example}[theorem]{Example}
\newcommand\pf{\begin{proof}}
\newcommand\epf{\end{proof}}
\newcommand{\lcosmash}{\mathop{\raisebox{0.2ex}{\makebox[0.92em][l]{${\scriptstyle>\mathrel{\mkern-4mu}\blacktriangleleft}$}}}}
\newcommand{\lboson}{\mathop{\raisebox{0.2ex}{\makebox[0.86em][l]{${\scriptstyle>\mathrel{\mkern-4mu}\lessdot}$}}\raisebox{0.12ex}{\hspace{-0.8mm}$\shortmid$}}}
\newcommand{\op}{\operatorname}
\newcommand{\cO}{\mathscr{O}}
\numberwithin{equation}{section}
\title[Affine algebraic super-groups with integral]
{Affine algebraic super-groups with integral}
\author[A.~Masuoka]{Akira Masuoka}
\address{Akira Masuoka,
Institute of Mathematics, 
University of Tsukuba, 
Ibaraki 305-8571, Japan}
\email{akira@math.tsukuba.ac.jp}
\author[T.~Shibata]{Taiki Shibata}
\address{Taiki Shibata,
Department of Applied Mathematics,\
Okayama University of Science, 
Okayama 700-0005, Japan}
\email{shibata@xmath.ous.ac.jp}
\author[Y.~Shimada]{Yuta Shimada}
\address{Yuta Shimada,
Graduate School of Pure and Applied Sciences, 
University of Tsukuba, Ibaraki 305-8571, Japan}
\email{shimada@math.tsukuba.ac.jp}
\begin{document}

\begin{abstract}
We generalize to the super context, the known fact that if an affine algebraic group $G$ over a commutative ring $k$ acts freely (in an appropriate sense) on
an affine scheme $X$ over $k$, then the dur sheaf $X\tilde{\tilde{/}}G$ of $G$-orbits is an affine scheme in the following two cases:
(I)~$G$ is finite; (II)~$k$ is a field, and $G$ is linearly reductive. 
An emphasize is put on the more difficult generalization in the second case; 
the replaced assumption then is that an affine algebraic super-group $G$ over an arbitrary
field has an integral. 
Those super-groups which satisfy the assumption are characterized,
and are seen to form a large class if $\operatorname{char}k=0$. 
Hopf-algebraic techniques including bosonization are applied to prove the results. 
\end{abstract}

\maketitle

\noindent
{\sc Key Words:}
affine super-group,  affine super-scheme, Hopf super-algebra, integral, super-torsor

\medskip
\noindent
{\sc Mathematics Subject Classification (2010):}
14L15, 
14M30, 
16T05 

\section{Introduction}\label{sec:introduction}

As a recent progress in super-geometry one may refer to a series of results on algebraic super-groups obtained
by using Hopf-algebraic techniques; see \cite{M1}-\cite{MZ2}, \cite{SZ0}, \cite{SZ} and \cite{Sh}. 
This paper is written so as to hopefully contribute to the progress. 

\subsection{Background and our aim}
To discuss (group) schemes and their super analogues, we adapt
the functorial point of view just as Demazure and Gabriel \cite{DG}, and
Jantzen \cite{J} do. Throughout, we work over a non-zero commutative ring $k$. Suppose that 
an affine algebraic group (scheme) $G$ (over $k$) acts freely on an affine scheme $X$ from the right. Then one can construct the 
dur sheaf $X\tilde{\tilde{/}}G$ of $G$-orbits in a simple manner. The following is known:

\begin{theorem}\label{thm:known}
$X\tilde{\tilde{/}}G$ is an affine scheme, if
\begin{itemize}
\item[(i)] 
$G$ is finite, or
\item[(ii)]
$k$ is a field, 
$G$ is linearly reductive, and the $G$-action
on $X$ is free in a stronger sense. 
\end{itemize}
\end{theorem}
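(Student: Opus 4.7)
The plan is to translate the statement into Hopf-algebraic terms and identify the quotient with the spectrum of the coinvariant subalgebra. Set $A := \cO(X)$, $H := \cO(G)$; the right action becomes a right $H$-comodule algebra structure $\rho : A \to A \otimes H$, and let $B := A^{\mathrm{co}\,H} = \{a \in A : \rho(a) = a \otimes 1\}$. Using the universal property of the dur sheaf together with fpqc descent, one shows that $X\tilde{\tilde{/}}G \cong \operatorname{Spec} B$ precisely when $B \hookrightarrow A$ is a faithfully flat $H$-Galois extension: (a) the canonical map $\beta : A \otimes_B A \to A \otimes H$, $a \otimes a' \mapsto (a \otimes 1)\rho(a')$, is bijective, and (b) $A$ is faithfully flat over $B$. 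The algebraic reformulation of the free action already provides (a), so the real content is (b).

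For case (i), $H$ is a finitely generated projective $k$-module, hence Frobenius, and in particular it carries a nonzero integral. Transferring this via $\rho$ produces a $B$-linear trace $A \to B$; together with bijectivity of $\beta$ this lets one invoke the Kreimer--Takeuchi theorem (or Schneider's theorem in the finite case) to conclude that $A$ is a faithfully flat, in fact finitely generated projective, $B$-module.

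For case (ii), linear reductivity over a characteristic-zero field is equivalent to $H$ admitting a two-sided total integral $\phi : H \to k$ with $\phi(1)=1$. This produces a Reynolds operator $R := (\mathrm{id}_A \otimes \phi)\circ\rho : A \to B$, a $B$-linear projection onto $B$. The strengthened freeness hypothesis ensures bijectivity of $\beta$, and one then applies Schneider's structure theorem for $H$-Galois extensions, which takes $\phi$ as an essential input, to deduce that $A$ is faithfully flat over $B$.

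The main obstacle, and the step that this paper sets out to generalize to the super setting, is case (ii). The existence of the integral immediately gives that $B$ is a $B$-module direct summand of $A$, but this is far short of faithful flatness; one genuinely needs a structure theorem exploiting the interaction between $\rho$, $\phi$, and the bijective $\beta$. In the super setting, the odd nilpotents obstruct a direct application of the classical descent arguments, which is why the authors plan to route the argument through bosonization, reducing the super problem back to the ordinary case treated above.
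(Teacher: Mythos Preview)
The paper does not prove Theorem~\ref{thm:known}; it is stated as known background, with Case~(i) attributed to Grothendieck and Case~(ii) to Mumford~\cite{MFK}. The paper's own contribution is the super analogue, and your Hopf-algebraic strategy is indeed the one the paper adopts for those analogues (Theorems~\ref{thm:finite} and~\ref{thm:affinity_integral}): reduce to showing that the coinvariant inclusion is a faithfully flat Hopf--Galois extension, then invoke Kreimer--Takeuchi in the finite case and a Schneider-type theorem in the integral case.

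There is, however, a recurring logical slip. You assert that ``the algebraic reformulation of the free action already provides~(a)'', i.e.\ bijectivity of $\beta : A \otimes_B A \to A \otimes H$, and again in Case~(ii) that ``the strengthened freeness hypothesis ensures bijectivity of~$\beta$''. This is not so: strong freeness (Definition~\ref{DSFr}) only says that $A \otimes A \to A \otimes H$ is \emph{surjective}, hence $\beta$ is surjective; injectivity of $\beta$ is not given by the hypothesis. The whole point of the Kreimer--Takeuchi theorem, and likewise of Schneider's and Schauenburg--Schneider's theorems, is that surjectivity of $\beta$ together with the structural hypothesis on $H$ (finitely generated projective in~(i); co-Frobenius in~(ii)) forces \emph{both} bijectivity of $\beta$ and the projectivity/faithful flatness of $A$ over~$B$. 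Thus your~(a) and~(b) are both outputs of those theorems, with surjectivity of $\beta$ as the input---compare the paper's proof of Theorem~\ref{thm:finite}, which is organized exactly this way. Once you correct the input/output roles your outline is sound; note also that the trace map you build in Case~(i) is not an input to Kreimer--Takeuchi, though it does serve separately to show that $B$ is a $B$-module direct summand of $A$, and hence that the projective $B$-module $A$ is faithful (cf.\ Lemma~\ref{lem:generator}).
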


The result in Case (i) is widely known as Grothendieck's Theorem. 
The result in Case (ii) was proved by Oberst; see the last part of
\cite[Satz A]{O}. 
In the situation that $k$ is a field of characteristic zero,
and $G$ is such as in Case (ii), whereas 
the $G$-action on $X$ is not assumed to be (strongly) free, Mumford et al.
\cite[Theorem 1.1]{MFK} (see also \cite[Satz D]{O}) prove the existence of
the quotient $X/G$ in the category of schemes, and that it is an affine scheme. 
Notice that the $X\tilde{\tilde{/}}G$
above is the quotient in the wider category of dur sheaves. We are more interested in 
$X\tilde{\tilde{/}}G$ when the $G$-action is free, 
since the natural morphism 
$X \to X\tilde{\tilde{/}}G$ then turns into a $G$-torsor \cite[III, $\S$4]{DG}
provided $X\tilde{\tilde{/}}G$ is an (affine) scheme.

Our aim of the paper is to generalize the known result above to the super context,
in which context torsors are called \emph{super-torsors}; see Definition \ref{DTOR}.

\subsection{Basics on super-symmetry}\label{subsec:SUSYbasics}
A \emph{super-module} $V$ over $k$ is a synonym of a $k$-module graded by the order-2 group $\mathbb{Z}_2=\{ 0,1 \}$,
and is thus the direct sum $V=V_0\oplus V_1$ of its even component $V_0$ and odd component $V_1$; 
it is called a \emph{super-vector space} if $k$ is a field. 
It is said to be \emph{purely even} if $V=V_0$. Saying 
an element $v \in V$, we often suppose, without explicit citation, that it is homogeneous, or namely, $0\ne v \in V_0\cup V_1$, and denote
its degree by $|v|$. The super-modules naturally form a tensor category, which we denote by $\mathsf{SMod}_k$. 
The morphisms are required to preserve the degree. 
The tensor product is 
the obvious one $\otimes_k$ taken over $k$, and it will be denoted by $\otimes$, simply. 
This tensor category is symmetric with respect to
the so-called \emph{super-symmetry}
\[
c_{V,W}: V \otimes W \overset{\simeq}{\longrightarrow} W \otimes V,\quad c_{V,W}(v\otimes w)=(-1)^{|v||w|}w\otimes v.
\]
A (Hopf) algebra object in $\mathsf{SMod}_k$ is called a (\emph{Hopf}) \emph{super-algebra}. 
A purely even (Hopf) super-algebra is the same as an ordinary (Hopf) algebra. 
Unless otherwise stated, (Hopf) super-algebras $R$ will be assumed to be \emph{super-commutative}. 
The assumption is precisely: (1)~the subalgebra $R_0$ of $R$ is central, and (2)~$x^2=0$ for all $x \in R_1$. If 
$2 : R \to R,\ y \mapsto 2y$ is injective (e.g., if $k$ is a field of characteristic $\ne 2$), then (1) and (2) are equivalent to that the product map
$m:R\otimes R\to R$ satisfies $m\circ c_{R,R}=m$. If $2=0$ in $R$
(e.g., if $k$ is a field of characteristic $2$), 
then they are equivalent to that $R$ 
is commutative (in the usual sense), and $x^2=0$ for all $x\in R_1$. 

\subsection{The main result}
We let $\mathsf{SAlg}_k$ denote the category of super-algebras over $k$; it is closed under the tensor product $\otimes$,
which presents the direct sum. 
Recall that the functorial point of view defines affine (group) schemes or dur sheaves to be functors which are defined
on the category $\mathsf{Alg}_k$ of commutative algebras, and satisfy certain conditions. 
We can define super-analogues of these notions,
by extending $\mathsf{Alg}_k$ to $\mathsf{SAlg}_k$; see Sections \ref{subsec:super-functor}--\ref{subsec:action} for details. 
Our main results are summarized as follows.

\begin{theorem}\label{thm:summary}
Suppose that an affine algebraic super-group $G$ acts freely on an affine super-scheme $X$. Then
the dur sheaf $X\tilde{\tilde{/}}G$ of $G$-orbits is constructed in a simple manner. 
This  $X\tilde{\tilde{/}}G$ is an affine super-scheme, 
and $X\to X\tilde{\tilde{/}}G$
is a $G$-super-torsor \textup{(}see \textup{Definition \ref{DTOR})}, if
\begin{itemize}
\item[(I)] \textup{(Theorem \ref{thm:finite})}\ $G$ is finite, or
\item[(II)] \textup{(Theorem \ref{thm:affinity_integral})}\ $k$ is a field, $G$ has an integral, and the $G$-action on $X$ is free in a stronger sense. 
\end{itemize}
\end{theorem}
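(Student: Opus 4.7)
The plan is to translate the problem into Hopf-algebraic language. Writing $G = \op{Sp} H$ for a super-commutative Hopf super-algebra $H$ and $X = \op{Sp} A$ for a super-commutative super-algebra $A$, the right $G$-action on $X$ corresponds to a super-algebra morphism $\rho : A \to A \otimes H$ satisfying the comodule axioms. The natural candidate for the quotient is $\op{Sp} B$, where $B := A^{\op{co} H} = \{\, a \in A \mid \rho(a) = a \otimes 1 \,\}$ is the sub-super-algebra of coinvariants. After verifying that the functor $R \mapsto \ms{SAlg}_k(B, R)$ satisfies the universal property defining the dur sheaf $X\tilde{\tilde{/}}G$, the whole theorem reduces to two Hopf-algebraic statements: (a) the canonical map $\beta : A \otimes_B A \to A \otimes H$, $a \otimes b \mapsto (a \otimes 1)\rho(b)$, is bijective, and (b) $A$ is faithfully flat as a $B$-super-module.

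For case (I), the hypothesis that $G$ is finite forces $H$ to be a finitely generated projective $k$-module, so a super analogue of the Larson--Sweedler theorem produces an integral on $H$; combined with standard descent arguments this yields both (a) and (b), closely following the classical proof of Grothendieck's theorem. For case (II), the given integral $\int : H \to k$ induces a Reynolds-type averaging operator $E := (\op{id}_A \otimes \int) \circ \rho : A \to A$, which is a $B$-linear projection of $A$ onto $B$. The stronger freeness hypothesis gives surjectivity of $\beta$, while $E$ provides a splitting that forces injectivity of $\beta$ and faithful flatness of $A$ over $B$ via the structure theorem for Hopf modules in the symmetric monoidal category $\ms{SMod}_k$.

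The main obstacle lies in case (II), specifically in establishing faithful flatness of $A$ over $B$ directly from the integral. Signs from the super-symmetry $c_{V,W}$ infiltrate every Hopf-module calculation, and the Hopf-module structure theorem must be lifted from the classical to the super setting. The cleanest route is bosonization: replace $H$ by an ordinary Hopf algebra $\widehat{H}$ carrying a compatible $\bb{Z}_2$-grading, replace $A$ by the correspondingly bosonized algebra, apply the classical integral-based proof in the non-super setting, and then track the $\bb{Z}_2$-grading to pull the conclusion back. Producing a genuine integral on $\widehat{H}$ from the super integral on $H$, and checking that all comodule structures, tensor products, and the canonical map $\beta$ behave correctly under bosonization, is where the bulk of the technical effort sits; the final step is to transport the affine realization $\op{Sp} B$ and the torsor property through this correspondence, which also delivers the characterization of those super-groups admitting an integral that is referred to in the abstract.
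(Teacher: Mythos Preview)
Your high-level strategy---translate to Hopf algebras and use bosonization---matches the paper's. But the mechanism you propose for case~(II) has a genuine gap.

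You write that the integral $\int : H \to k$ gives a Reynolds operator $E = (\mathrm{id}_A \otimes \int)\circ\rho$ which is a $B$-linear projection of $A$ onto $B$. This is only true when the integral is \emph{total}, i.e.\ $\int(1)=1$, which is precisely the linearly reductive case. The whole point of the theorem is to go beyond that case: by Weissauer's result, linearly reductive super-groups that are not purely even are very restricted, whereas super-groups with a (possibly non-total) integral form a large class in characteristic zero. For a typical such $G$ one has $\int(1)=0$ (already for the exterior algebra $\wedge(W)$ the integral is supported on the top piece), so $E(b)=b\int(1)=0$ for $b\in B$ and your projection collapses. The Hopf-module structure theorem does not rescue this: without a total integral there is no canonical $B$-linear splitting $A\to B$, and faithful flatness does not fall out of any averaging argument.

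What the paper actually does in case~(II) is more delicate. After bosonizing, the ordinary Hopf algebra $\hat{A}$ is only co-Frobenius, not cosemisimple, and one invokes the Schauenburg--Schneider results on generalized Hopf--Galois extensions to get bijectivity of $\hat{\beta}$ and projectivity of $\hat{B}$ over $C$. That still leaves the hardest step: showing $B$ is a \emph{generator} of $C$-modules, not merely projective. Here the paper uses the structural isomorphism $A\simeq\wedge(W)\otimes H$ and passes to the genuinely linearly reductive subgroup $G_{\mathsf{ev}}$ (in characteristic zero; a torus in positive characteristic), applying Schauenburg--Schneider's cosemisimple theorem there and then a finiteness argument to climb back from $D=B^{G_{\mathsf{ev}}}$ to $C=B^G$. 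Your sketch neither identifies this generator problem nor supplies the reduction to $G_{\mathsf{ev}}$, and bosonization alone---even with the integral correctly transported to $\hat{A}$---does not produce it.

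For case~(I), your outline is closer to what the paper does, though the paper does not argue via a super Larson--Sweedler theorem; it bosonizes immediately and applies the Kreimer--Takeuchi theorem to the (non-commutative) $\hat{A}$, then pulls back via the $\beta\leftrightarrow\hat{\beta}$ correspondence.
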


For the definition of (strong) free actions see Definitions \ref{DFr} and \ref{DSFr}
below.
The result in Case (I) was obtained by Zubkov \cite{Z2}, who assumes that $k$ is a field. 
The result in Case (II) as well generalizes his result, 
since one sees that every finite super-group $G$ over a filed has an integral, and free actions by such $G$ are necessarily strongly free.

We remark that as a special case of $X\tilde{\tilde{/}}G$ as above, 
the quotient $G\tilde{\tilde{/}}H$ of an affine algebraic super-group $G$ by a closed super-subgroup $H$
is discussed in \cite{M2}, \cite{Z1}, \cite{MZ1} and \cite{MT}. 

\subsection{Our method}
Our method of proving the result in both cases is Hopf-algebraic, using the \emph{bosonization technique} developed in \cite{MZ1}.
Let $G$ and $X$ be as in the last theorem. These correspond to a Hopf super-algebra $A$ and a super-algebra $B$, respectively.
An action $X \times G \to X$ corresponds to a co-action $B \to B\otimes A$; this last is a super-algebra map, 
and hence it involves the somewhat complicated super-symmetry. 
By Radford's bosonization construction \cite{R}, there arises a co-action $\hat{B}\to \hat{B}\otimes \hat{A}$
by an ordinary Hopf algebra $\hat{A}$ on an ordinary algebra $\hat{B}$; 
this does not involve the super-symmetry anymore, but $\hat{A}$ and $\hat{B}$ are non-commutative in general. 
The bosonization technique mentioned above shows how to deduce results from the ``bosonized", ordinary situation
to the super situation. Indeed, we have a plenty of results on non-commutative Hopf algebras which deserve to be applied henceforth. 
What we need here is results in Hopf-Galois Theory, which is a non-commutative generalization of theory of torsors;
the first successful result in the Hopf-Galois Theory was probably the Kreimer-Takeuchi Theorem \cite{KT},
which is a non-commutative generalization of Grothendieck's Theorem mentioned above. 
In fact, our result above in Case (I) follows easily by applying the bosonization technique to the Kreimer-Takeuchi Theorem. 
Therefore, 
our emphasize is put on Case (II), as will be seen in view of the title of this paper. 

\subsection{The core of the paper}
Suppose that $k$ is a field. To generalize the result of Theorem \ref{thm:known} in Case (ii), one should know 
(see Remark \ref{rem:Weissauer})
that those linearly reductive affine super-groups which are not ordinary affine groups are rather restricted, as was shown by Weissauer \cite{Weissauer}.
We choose the larger class, as in (II) above, which consists of all affine algebraic super-groups $G$ with integral
(see Section \ref{subsec:preliminary_integral} for definition), to obtain the desired conclusion of Theorem \ref{thm:summary}.
Those $G$ are characterized by the property that the representation category $G$-$\mathsf{SMod}$ has enough projectives; see
Proposition \ref{prop:characterize}. 
We prove in Theorem \ref{thm:integral} 
that an affine algebraic super-group $G$ has an integral if and only if
the naturally associated affine algebraic group $G_{\mathsf{ev}}$ has an integral. 
This, combined with Sullivan's Theorem (see Theorem \ref{thm:Sullivan}) which characterizes affine algebraic groups with integral, 
tells us that the class of affine algebraic super-groups with integral is indeed large in characteristic zero; see Remark \ref{rem:large_enough}. 

We remark here that the integrals of complex affine super-groups were previously studied by 
Scheunert and Zhang \cite{SZ0}, \cite{SZ}.
It may be said that our results on integrals in characteristic zero merely refine theirs, applying subsequently developed theory. 
An advantage of ours is an explicit formula of the integral, which is applied to 
characterize connected affine algebraic super-groups $G$ with two-sided integral; 
it turns out that $G$ is such if $G_{\mathsf{ev}}$ is semisimple. See Remarks \ref{rem:Scheunert_Zhang}
and \ref{rem:explicit_integral}, 
and Proposition \ref{prop:unimodular}.  

After proving the main result, Theorem \ref{thm:affinity_integral}, in Case (II), the core section as well as the paper ends with 
Section \ref{subsec:example}; we compare there the proved result with subsequent results by Oe and the first named author \cite{MOe}, and give
an example, Example \ref{ex:alpha}, as well as
a short discussion on significance of that proved result.

\section{Affinity of quotients}\label{sec:affinity}

\subsection{Basics on super-algebras and super-modules}\label{sec:basics}
This subsection is supplementary to Section \ref{subsec:SUSYbasics}. Let $R$ be a super-algebra.
Recall that it is an algebra object in $\mathsf{SMod}_k$.
A left (resp. right) module object over $R$ is called a \emph{left} (resp., \emph{right}) $R$-\emph{super-module}.
Since $R$ is assumed to be super-commutative, we need not specify ``left" or ``right", indeed. To be more precise,
given a $k$-super-module $M$, the left and the right $R$-super-module structures on $M$ are in one-to-one 
correspondence, by twisting the side through the super-symmetry $c_{R, M} :R \otimes M\overset{\simeq}{\longrightarrow} M\otimes R$.
Let $R$-$\mathsf{SMod}$ denote the ($k$-linear abelian) category of $R$-super-modules. It has $R\oplus R[1]$ as a projective
generator, where $R[1]$ denotes the degree shift of $R$, so that $R_{i+1}=R[1]_i$, $i \in \mathbb{Z}_2$. Therefore,
an $R$-super-module is projective (in $R$-$\mathsf{SMod}$) if and only if it is a direct summand of 
some copies of $R$ or $R[1]$. Every $R$-super-module is regarded as a left and right module over the algebra $R$,
with the $\mathbb{Z}_2$-grading forgotten. 

\begin{lemma}\label{lem:basic_super-module}
Let $M$ be an $R$-super-module.
\begin{itemize}
\item[(1)] If $M$ is finitely generated as a left or right $R$-module, then it has a finite set of homogeneous generators.
\item[(2)] $M$ is \textup{(}faithfully\textup{)} flat as a left or/and right $R$-module if and only if the tensor-product functor
\[ M \otimes_R : R\text{-}\mathsf{SMod} \to R\text{-}\mathsf{SMod} \]
is \textup{(}faithfully\textup{)} exact. 
\item[(3)]
$M$ is projective in $R$-$\mathsf{SMod}$ if and only if it is projective as a left or/and right $R$-module.
\end{itemize}
\end{lemma}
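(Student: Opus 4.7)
The plan is to handle the three parts in sequence, each organised around the canonical homogeneous decomposition $m=m_0+m_1$ of an element of an $R$-super-module.

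For (1), given a finite ungraded generating set $m_1,\ldots,m_n$ of $M$, I would show that the finite homogeneous set $\{m_{i,0},m_{i,1}\mid 1\le i\le n\}$ generates $M$ in $R$-$\mathsf{SMod}$: writing a homogeneous $m\in M$ as $\sum r_i m_i$, decomposing the coefficients $r_i$ into homogeneous parts, and collecting terms of the appropriate degree yields the required homogeneous $R$-linear expression.

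For (2), the pivotal observation is that an ordinary $R$-module $N$ can be viewed as a purely even super-module, in which case the super-symmetry sign in $M\otimes_R N$ is identically $1$, so the tensor product computed in $R$-$\mathsf{SMod}$ agrees with the ordinary one on the underlying modules. Conversely, every short exact sequence in $R$-$\mathsf{SMod}$ is one of ordinary $R$-modules with degree-preserving maps. Threading these two translations through the tensor functor gives the equivalence ``left flat ordinary $R$-module $\Longleftrightarrow$ $M\otimes_R-$ exact on $R$-$\mathsf{SMod}$'', and the right case is identical. The equivalence of left and right flatness follows from super-commutativity via the grade-preserving sign twist $mr=(-1)^{|m||r|}rm$. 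Faithful flatness is handled the same way through the criterion ``$M\otimes_R N=0$ $\Longrightarrow$ $N=0$''.

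For (3), one direction is immediate: a projective $R$-super-module is a summand of copies of $R\oplus R[1]$, which is free (hence projective) as an ungraded $R$-module. For the converse, given an $M$ that is ordinary-projective, a surjection $\pi:N\twoheadrightarrow N''$ in $R$-$\mathsf{SMod}$, and a super-module map $f:M\to N''$, ordinary projectivity supplies an ungraded $R$-linear lift $g:M\to N$. Decomposing $g=g^{(0)}+g^{(1)}$ into its degree-preserving and degree-shifting components, I would verify that each $g^{(i)}$ is itself $R$-linear (the one delicate check, on odd scalars, uses super-commutativity of $R$), and then comparing degrees in $\pi g=f$ forces $\pi g^{(0)}=f$, producing the required super-module lift. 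I expect the main subtleties to lie precisely here and in the translation step of (2): both rest entirely on super-commutativity rather than on any deeper structural input.
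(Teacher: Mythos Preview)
Your proposal is correct. For (1) and (2) the paper simply says ``easy to see'' and defers to \cite{M1}; your arguments are the natural ones such a reference would unpack to.

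For (3) your route genuinely differs from the paper's. The paper invokes the fact that the smash product $R\rtimes(k\mathbb{Z}_2)^*$ is separable over $R$: since $R$-super-modules are precisely the modules over this smash product, separability of the ring extension $R\hookrightarrow R\rtimes(k\mathbb{Z}_2)^*$ means restriction reflects projectivity, and the equivalence drops out structurally. Your argument instead works bare-handed---lift in $R$-$\mathsf{Mod}$, then project onto the degree-preserving component. This is more elementary and entirely self-contained, at the cost of the unifying viewpoint the separability argument provides (which, for instance, dualises immediately to yield the injective analogue used for super-comodules just after this lemma). One small correction: the check that $g^{(0)}$ is $R$-linear does \emph{not} actually require super-commutativity of $R$; for homogeneous $r,m$ one has $g^{(0)}(rm)=(g(rm))_{|r|+|m|}=(rg(m))_{|r|+|m|}=r\,g^{(0)}(m)$ purely from the grading compatibility of the action. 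Super-commutativity enters only in the ``left~$\Leftrightarrow$~right'' clauses of (2) and (3), exactly as you say for flatness.
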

\pf
(1)\ This is easy to see.\
(2)\ See the proof of \cite[Lemma 5.1 (1)]{M1}, which works as well when the base $k$ is a commutative ring.\
(3)\ See the proof of \cite[Lemma 5.1 (2)]{M1}, which is based on the fact that
the smash product $R \rtimes (k\mathbb{Z}_2)^*$ is separable over $R$. 
\epf

\begin{lemma}\label{lem:generator}
Let $R$ be a super-algebra, and let $P$ be an  
$R$-super-module which is finitely generated projective as a \textup{(}left or right\textup{)} $R$-module. 
Then $P$ is a generator of $R$-modules 
if and only if $P$ is faithful as an $R$-module.
\end{lemma}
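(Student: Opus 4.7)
The $\mathbb{Z}_2$-grading plays no essential role, so I would forget it and view $P$ as an ordinary left $R$-module. The plan is to exploit the trace ideal $\tau(P) := \sum_{f \in \mathrm{Hom}_R(P,R)} f(P)$, a two-sided ideal of $R$. The standard reformulation is that $P$ generates $R$-modules if and only if $\tau(P) = R$: a finite presentation $1 = \sum_i f_i(q_i)$ assembles into a surjection $P^n \twoheadrightarrow R$ via $(x_i) \mapsto \sum_i f_i(x_i)$, and conversely any surjection $P^{(I)} \twoheadrightarrow R$ reads off such a presentation.

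Using that $P$ is finitely generated projective, I would pick a finite dual basis $\{p_i, f_i\}_{i=1}^n$ satisfying $p = \sum_i f_i(p) p_i$. Then $\tau(P) = \sum_i f_i(P)$ is finitely generated on either side, $\tau(P)\cdot P = P$ follows immediately, and hence $\tau(P) = \tau(P)^2$ by applying each $f_j$. The direction ``generator $\Rightarrow$ faithful'' is then at hand: if $rP = 0$, then $r\cdot\tau(P) = 0$ by $R$-linearity of each $f \in P^*$, and $\tau(P) = R$ forces $r = 0$.

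The main obstacle is the converse, ``$P$ faithful $\Rightarrow \tau(P) = R$''. I would organize the dual-basis identities $p_i = \sum_j f_j(p_i) p_j$ into a matrix equation $(I - A)\vec p = 0$ with $A = (f_j(p_i))_{i,j} \in M_n(\tau(P))$. In the commutative case, multiplying by the classical adjugate would yield $\det(I - A)\cdot p_i = 0$ for all $i$; faithfulness then forces $\det(I - A) = 0$, and the expansion $\det(I - A) = 1 - (\text{element of }\tau(P))$ (using idempotence of $\tau(P)$) gives $1 \in \tau(P)$. Since $R$ here is only super-commutative, I would realize this strategy by localizing at prime ideals of the commutative central subring $R_0$: a finitely generated projective super-module over a super-commutative super-ring with local even part is free as a super-module (the super-analogue of Kaplansky's theorem), faithfulness of $P_\mathfrak{p}$ then forces the local trace ideal to be everything, and both properties in question localize correctly because $P$ is finitely presented. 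Setting up this super-Kaplansky structure result carefully, and handling the global reassembly via the commutative ring $R_0$, is the most delicate step.
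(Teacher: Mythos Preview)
Your proposal is correct and follows essentially the same route as the paper: both reduce to showing the trace ideal equals $R$, localize at maximal (or prime) ideals $\mathfrak{m}$ of the central commutative subring $R_0$, observe that $R_{\mathfrak{m}}$ is a genuinely local ring so that $P_{\mathfrak{m}}$ is free, and then check $P_{\mathfrak{m}}\ne 0$ from faithfulness. The paper phrases this last step by encoding faithfulness as an injection $R\hookrightarrow P_1\oplus\dots\oplus P_r$ (with $P_i$ a copy of $P$ or $P[1]$) and localizing it, which is exactly your ``faithfulness localizes for finitely generated modules''; your Cayley--Hamilton/adjugate detour is unneeded, and the ``super-Kaplansky'' input is simply the locality of $R_{\mathfrak{m}}$.
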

\begin{proof}
``Only if" is obvious. 
For ``if'', assume that $P$ is faithful. 
To prove that the trace ideal $\mathfrak{T}(P)$ of $P$ coincides
with $R$, we wish to show that the localizations 
$\mathfrak{T}(P)_{\mathfrak{m}}$ and $R_{\mathfrak{m}}$ at every maximal
ideal $\mathfrak{m}$ of the central subalgebra $R_0 \subset R$ coincide.  
Since $R_{\mathfrak{m}}$ is local (indeed, $R_{\mathfrak{m}}/\mathfrak{m}R_{\mathfrak{m}}$ is a field modulo the nil ideal generated
by the odd component),
$P_{\mathfrak{m}}$ is a finitely generated free
$R_{\mathfrak{m}}$-module, which has $\mathfrak{T}(P)_{\mathfrak{m}}$ as its trace ideal. 
It remains to prove $P_{\mathfrak{m}}\ne 0$. 

An element of $R$ annihilates $P$ if and only if it
annihilates all elements in an arbitrarily chosen set of homogeneous generators
of $P$; see Lemma \ref{lem:basic_super-module} (1).
Therefore, 
the faithfulness of $P$ is expressed as an $R$-super-module injection $R \to P_1\oplus \dots \oplus P_r$,
where $0<r<\infty$, and each $P_i$ is a copy of $P$ or of its degree shift $P[1]$. 
This implies $P_{\mathfrak{m}}\ne 0$. 
\end{proof}

Suppose that $k$ is a field. 
Let $C$ be a super-coalgebra, or namely, a coalgebra object in $\mathsf{SMod}_k$.
A (left or right) $C$-\emph{super-comodule} is a $C$-comodule object. 

\begin{lemma}
A left or right $C$-super-comodule is injective in the category of those super-comodules 
if and only if it is injective, regarded as an ordinary comodule
over the coalgebra $C$. 
\end{lemma}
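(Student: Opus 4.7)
The plan is to analyze how $M$ sits inside the cofree super-comodule $M \otimes C$ obtained from the underlying super-vector space of $M$ (with coaction $\mr{id}\otimes \Delta_C$). This object has two features simultaneously: it is injective among $C$-super-comodules, because it is cofree and the adjunction $\op{Hom}(-,M\otimes C) \cong \op{Hom}_{\ms{SMod}_k}(-,M)$ is exact on super-vector spaces; and after forgetting the grading, it becomes the cofree ordinary $C$-comodule on the underlying vector space of $M$, hence injective among ordinary $C$-comodules as well. The coaction $\rho_M:M\to M \otimes C$ is simultaneously a super- and an ordinary-comodule embedding, split as a map of super-vector spaces by $\mr{id}\otimes \varepsilon_C$. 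Both implications will follow by comparing how this single embedding is retracted in the two categories.

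The key technical step is the following decomposition. Given super-comodules $N$ and $M$ and an arbitrary ordinary $C$-comodule map $f:N\to M$, define, for homogeneous $n\in N_i$,
\[
f^{\mr{ev}}(n) := \pi^M_i(f(n)),\qquad f^{\mr{od}}(n) := \pi^M_{i+1}(f(n)),
\]
where $\pi^M_j:M\to M_j$ denotes projection onto the degree-$j$ part. Then $f^{\mr{ev}}$ preserves degree, $f^{\mr{od}}$ shifts it by one, and $f = f^{\mr{ev}}+f^{\mr{od}}$. Verifying that both summands are again $C$-comodule maps amounts to comparing, in the identity $\rho_M\circ f=(f\otimes\mr{id}_C)\circ\rho_N$, the contributions to each total super-degree inside $M\otimes C$; since $\rho_M$ preserves the total grading, the degree-$i$ and degree-$(i+1)$ pieces separately yield the comodule identities for $f^{\mr{ev}}$ and $f^{\mr{od}}$. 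This argument is uniform in characteristic.

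For ``only if'': assuming $M$ is injective as a super-comodule, the super-embedding $\rho_M$ admits a super-comodule retraction $s$, which is in particular an ordinary-comodule retraction; hence $M$ is an ordinary direct summand of the ordinary-injective $M\otimes C$, so $M$ is injective as an ordinary comodule. For ``if'': assuming $M$ is ordinary-injective, $\rho_M$ admits an ordinary retraction $s:M\otimes C \to M$; decompose $s=s^{\mr{ev}}+s^{\mr{od}}$ by the key step. Both $\rho_M$ and $\mr{id}_M$ are degree-preserving, so decomposing the identity $s\circ\rho_M=\mr{id}_M$ according to super-degree forces $s^{\mr{ev}}\circ\rho_M=\mr{id}_M$. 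Hence $s^{\mr{ev}}$ is a super-comodule retraction of $\rho_M$, making $M$ a super-direct summand of the super-injective $M \otimes C$.

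The main subtle point is the decomposition step. A naive averaging of $f$ with its conjugate by the parity involution would require characteristic $\ne 2$; by instead taking degree-components, one obtains the decomposition in any characteristic, and the verification that each component remains a comodule map uses only that $\rho_M$ preserves the total super-grading of $M\otimes C$.
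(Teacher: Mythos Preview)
Your proof is correct and is genuinely different in presentation from the paper's. The paper simply says to dualize the proof of the projectivity statement for super-modules, which rests on the fact that the smash product $R\rtimes(k\mathbb{Z}_2)^*$ is separable over $R$; dualized, this becomes the statement that $C$-super-comodules are the same as ordinary comodules over the smash coproduct $C\lcosmash k\mathbb{Z}_2$, and that this coalgebra extension is coseparable over $C$, so injectivity transfers in both directions. Your argument is an explicit unpacking of the same mechanism: the projections $\pi^M_i$ you use to form $f^{\mathrm{ev}}$ and $f^{\mathrm{od}}$ are precisely the separability idempotents of $(k\mathbb{Z}_2)^*\cong k\times k$ acting on $M$, so your degree-decomposition step is the (co)separability splitting written out by hand. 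What your approach buys is self-containment---no appeal to smash (co)products or (co)separability is needed, and the reader sees directly why characteristic~$2$ causes no trouble. What the paper's approach buys is brevity and a clearer conceptual explanation that generalizes immediately (replace $\mathbb{Z}_2$ by any group whose group algebra is cosemisimple, or more generally by any cosemisimple Hopf algebra).
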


\begin{proof}
This follows easily by dualizing the proof of Lemma \ref{lem:basic_super-module} (3), above. 
\end{proof}


\subsection{Basics on super-functors}\label{subsec:super-functor}
Suppose that $R \to S$ is a map of super-algebras. It is said to be  
an \emph{fpqc covering}, 
if $S$ is faithfully flat as a left or equivalently, right $R$-module; see Lemma \ref{lem:basic_super-module} (2). 
The map is said to be an \emph{fppf covering}, if it is an fpqc covering, and if
$S$ is finitely presented as a super-algebra over $R$; the second assumption means that $S$ is presented
so as $R[x_1,\cdots,x_m;y_1,\cdots,y_n]/I$, where $x_1,\dots,x_m$ are finitely many even variables, $y_1,\dots,y_n$
are finitely many odd variables, and $I$ is a super-ideal which is generated by finitely many homogeneous elements. 

Recall that $\mathsf{SAlg}_{k}$ denotes the category of super-algebras over $k$. 
A $k$-\emph{super-functor} is a set-valued functor defined on the category 
$\mathsf{SAlg}_{k}$.
A $k$-super-functor $X$ is called an \emph{affine super-scheme} (over $k$), 
if it is representable, and in addition, if the super-algebra which represents $X$ is non-zero;
the added assumption is equivalent to that $X(R)\ne \emptyset$ for some $0\ne R\in \mathsf{SAlg}_{k}$.
We say that a $k$-super-functor $X$ is
a \emph{dur sheaf} (resp., \emph{sheaf}), if it preserves finite direct products and the exact diagram
\[ R \to S \rightrightarrows S \otimes_R S \]
which naturally arises from any fpqc (resp., fppf) covering $R\to S$. The dur sheaves and the 
sheaves form full subcategories in the category of $k$-super-functors; the latter subcategory includes
the former, which in turn includes the category of affine super-schemes.

An \emph{affine super-group} (over $k$) is an representable group-valued functor defined  
on $\mathsf{SAlg}_{k}$; it is uniquely represented by a Hopf super-algebra.
Given an affine super-scheme or super-group $X$, we let $\cO(X)$ denote the (Hopf) super-algebra 
which represents $X$. In this case, $X$ is said to be \emph{algebraic} (resp., \emph{Noetherian}),
if $\cO(X)$ is finitely generated
as an algebra (resp., if $\cO(X)$ is Noetherian \cite[Section A.1]{MZ2} in the sense that its
super-ideals satisfy the ascending chain condition). 

\subsection{Actions by affine super-groups}\label{subsec:action}
Let $X$ be an affine super-scheme, and let $G$ be an affine super-group. 
Suppose that $G$ acts on $X$. Here and in what follows, $G$-actions are always supposed to be from the right. 
Thus we are given a super-algebra map 
\begin{equation}\label{eq:rho}
\rho : \cO(X) \to \cO(X)\otimes \cO(G)
\end{equation}
by which $\cO(X)$ is a right $\cO(G)$-comodule.
The \emph{super-subalgebra of $G$-invariants} (or \emph{of $\cO(G)$-co-invariants}) in $\cO(X)$ is defined by 
\begin{equation}\label{eq:invariants}
\cO(X)^{G}= \{ b \in \cO(X) \mid \rho(b) = b \otimes 1\}.
\end{equation}
This is indeed a super-subalgebra of $\cO(X)$. 

Let us consider the morphism
\begin{equation}\label{eq:dual_alpha}
X \times G \to X \times X,\quad (x,g) \mapsto (x, xg) 
\end{equation}
of $k$-super-functors.

\begin{definition}\label{DFr}
We say that the $G$-action on $X$ is \emph{free}, or $G$ acts \emph{freely} on $X$,  
if for every super-algebra $R$, 
the map $X(R) \times G(R) \to X(R) \times X(R)$ given as above
is injective; see \cite[Part I, 5.5]{J}. The condition is equivalent to saying that
the morphism \eqref{eq:dual_alpha} is a monomorphism in the category 
of (affine) super-schemes. 
\end{definition}

Obviously, the morphism \eqref{eq:dual_alpha} is represented by 
the super-algebra map
\begin{equation}\label{eq:alpha}
\cO(X)\otimes \cO(X) \to \cO(X) \otimes \cO(G),\quad
a \otimes b\mapsto a \, \rho(b). 
\end{equation}

\begin{definition}\label{DSFr}
We say that the $G$-action on $X$ is \emph{strongly free}, or $G$ acts 
\emph{strongly freely} on $X$,
if the super-algebra map \eqref{eq:alpha} is surjective. The condition
is equivalent to saying that the morphism \eqref{eq:dual_alpha}, regarded
as that of (affine) super-schemes, is a closed immersion. 
\end{definition}

Mumford et al. \cite[Definition 0.8]{MFK} and Oberst \cite[p.510]{O}
call strongly free actions  
(in our sense) \emph{free}, 
in the ordinary, or namely, non-super situation.  

\begin{prop}\label{PFree}
The $G$-action on $X$ is free, if it is strongly free. The converse holds provided
$\cO(G)$ is finitely generated as a $k$-module
\end{prop}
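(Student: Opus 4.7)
For the first direction (strong freeness implies freeness), the map $\alpha$ of \eqref{eq:alpha} being surjective implies that pre-composition with $\alpha$ yields an injection $\op{Hom}_{\mathsf{SAlg}_k}(\cO(X)\otimes \cO(G), R) \hookrightarrow \op{Hom}_{\mathsf{SAlg}_k}(\cO(X)\otimes \cO(X), R)$ for every super-algebra $R$. Unpacking via the natural identifications $(X \times G)(R) = \op{Hom}_{\mathsf{SAlg}_k}(\cO(X)\otimes \cO(G), R)$ and $(X \times X)(R) = \op{Hom}_{\mathsf{SAlg}_k}(\cO(X)\otimes \cO(X), R)$, this says exactly that $X \times G \to X \times X$ is a monomorphism of functors, which is the freeness of the $G$-action.

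For the converse, assume the action is free and set $A := \cO(X)$, $H := \cO(G)$. Freeness is equivalent to $\alpha: A \otimes A \to A \otimes H$ being a categorical epimorphism in $\mathsf{SAlg}_k$. The plan is to factor $\alpha$ as $A \otimes A \twoheadrightarrow B \hookrightarrow A \otimes H$ through its image $B$; then the inclusion $i: B \hookrightarrow A \otimes H$ inherits the property of being a categorical epimorphism. The finiteness hypothesis enters as follows: choose homogeneous $k$-module generators $h_1,\dots,h_n$ of $H$; since $\alpha(a \otimes 1) = a \otimes 1$ one has $A \otimes 1 \subseteq B$, so every element $\sum_i a_i \otimes g_i$ of $A \otimes H$ rewrites, after expanding each $g_i$ in the $h_j$, as a $B$-linear combination of $1 \otimes h_1,\dots,1 \otimes h_n$. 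Hence $A \otimes H$ is finite as a $B$-super-module via $i$.

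The problem therefore reduces to the claim that every finite injective epimorphism $i: B \hookrightarrow C$ of super-commutative super-algebras is surjective. Set $M = C/B$, regarded as a $B$-super-module. The epi property of $i$ is equivalent to $C \otimes_B C \to C$ being bijective, so tensoring the short exact sequence $0 \to B \to C \to M \to 0$ with $C$ over $B$ and using right exactness yields $M \otimes_B C = 0$. To conclude $M = 0$, I would localize at each maximal ideal $\mathfrak{m}$ of the central subalgebra $B_0$, obtaining the local super-algebra $B_{\mathfrak{m}}$ with residue field $B_0/\mathfrak{m}$ in the sense of the proof of Lemma \ref{lem:generator}. Reducing modulo the maximal super-ideal gives a super-algebra epimorphism $\overline{B} \to \overline{C}$ over a field with $\dim_{\overline{B}} \overline{C} < \infty$, and the identity $\dim \overline{C} = (\dim \overline{C})^2$ forced by the epi condition compels $\overline{C} = \overline{B}$. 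Hence $\overline{M} = 0$, and super-Nakayama applied to the finitely generated $B_{\mathfrak{m}}$-super-module $M_{\mathfrak{m}}$ yields $M_{\mathfrak{m}} = 0$ for every $\mathfrak{m}$, so $M = 0$ and $i$ is surjective.

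The main obstacle I anticipate is the careful adaptation of Nakayama's lemma and the residue-field reduction to the super-commutative setting, in particular verifying that the classical argument that a finite epimorphism of commutative rings is surjective transfers cleanly when the grading and the nilpotent odd part of the local residue ring are accounted for. Given that the local super-algebra machinery used in the proof of Lemma \ref{lem:generator} already handles precisely these issues, I expect this step to be a routine translation.
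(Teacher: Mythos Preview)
Your proposal is correct and follows essentially the same route as the paper: both parts reduce the converse to the statement that a module-finite epimorphism of super-commutative super-rings is surjective. The paper simply cites this fact from \cite[Proposition 1.1]{Z2}, whereas you supply a direct Nakayama-style proof via the characterization $C\otimes_B C \cong C$ of epimorphisms; the only small omission is the case $\dim_{\overline{B}}\overline{C}=0$, which is excluded because $B_{\mathfrak m}\hookrightarrow C_{\mathfrak m}$ stays injective after localization and Nakayama would then force $B_{\mathfrak m}=0$.
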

\pf 
The first assertion is obvious. 
The second follows, 
since 
a map $f:R\to S$ of super-rings 
(or namely, of super-algebras over $\mathbb{Z}$)
such that 
$S$ is finitely generated, regarded as an $R$-module through the map, is surjective, 
if and only if it is an epimorphism in the category of super-rings. 
The essential ``only if" part follows as in the non-super situation 
by using the fact that
$f:R \to S$ is an epimorphism in the category if and only if the map tensored with the identity map on $S$
\begin{equation*}\label{eq:SSRS}
f\otimes_R \mathrm{id}:S=R\otimes_RS\to S\otimes_RS
\end{equation*}
is bijective. In fact, if $f$ is not surjective,
then the cokernel $\operatorname{Coker} f$, being finitely generated, has a non-zero quotient (cyclic) $R$-super-module 
$R/I$, where $I\subsetneqq R$ is a super-ideal. 
Since $(\operatorname{Coker} f)\otimes_RS\,
 (=\operatorname{Coker}(f\otimes_R\mathrm{id}))$, having $(R/I)\otimes_R(R/I)\, (=R/I)$ 
as a non-zero quotient, is non-zero, the map $f\otimes_R\mathrm{id}$ cannot be bijective.
\epf

Suppose that an affine super-group $G$ acts freely on an affine super-scheme $X$.
We have the $k$-super-functor which associates to each super-algebra $R$, 
the set $X(R)/G(R)$ of all $G(R)$-orbits in $X(R)$. The freeness assumption 
ensures that the map $X(R)/G(R)\to X(S)/G(S)$ is injective if $R \to S$ is injective. 
This makes it possible, as shown in \cite[Part I, 5.4]{J} in the non-super situation, to construct in a simple manner,
 the dur sheaf $X \tilde{\tilde{/}} G$ and the sheaf $X \tilde{/} G$  
which both are associated with the $k$-super-functor above. 
This dur sheaf (resp., sheaf) 
is characterized by the exact diagram
\begin{equation}\label{EEXACT}
X \times G \rightrightarrows X \to X \tilde{\tilde{/}} G
\quad (\text{resp.,}\ X \times G \rightrightarrows X \to X \tilde{/} G)
\end{equation}
in the category of dur sheaves (resp., of sheaves), where the paired arrows represent
the action and the projection.

\subsection{Affinity criteria}\label{subsec:affinity_criteria}
We reproduce from \cite{MZ1} the following super-analogue of Oberst's Theorem \cite[Satz A]{O} in a slightly restricted form so as to meet our use;
it originally excluded the case when $\operatorname{char}k= 2$, in which case the proof is seen to work as well,
under the present, appropriate definition of super-commutativity. 

\begin{theorem}[\text{\cite[Theorem 7.1]{MZ1}}]\label{thm:affinity}
Suppose that $k$ is a field. 
Suppose that an affine super-group $G$ acts freely
on an affine super-scheme $X$. 
\begin{itemize}
\item[(1)]
The following are equivalent:
\begin{itemize}
\item[(a)]
The dur sheaf $X \tilde{\tilde{/}} G$ is an affine super-scheme;
\item[(b)]
The super-algebra map given by \eqref{eq:alpha} is surjective \textup{(}or in other words,
the $G$-action on $X$ is strongly free\textup{)}, and
$\cO(X)$ is injective as a right $\cO(G)$-comodule;
\item[(c)]
$\cO(X)^{G}\hookrightarrow \cO(X)$ is an fpqc covering, and
the super-algebra map 
\begin{equation}\label{eq:beta}
\beta : \cO(X) \otimes_{\cO(X)^{G}} \cO(X) \to \cO(X) \otimes \cO(G),\quad 
\beta(a \otimes b) = a \rho (b)
\end{equation}
is bijective.
\end{itemize}
If these conditions are satisfied, then $X \tilde{\tilde{/}} G$ is represented by $\cO(X)^{G}$.
\item[(2)]
Suppose that $X$ is Noetherian, and $G$ is algebraic. 
If the equivalent conditions above are satisfied, then 
the affine super-scheme $X \tilde{\tilde{/}} G$
is Noetherian, and coincides with the sheaf $X \tilde{/} G$. 
\end{itemize}
\end{theorem}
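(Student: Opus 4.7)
The theorem is the super-analogue of Schneider's characterization of faithfully flat Hopf--Galois extensions, and my plan is to apply Radford's bosonization to reduce it to that classical non-super (though non-commutative) statement, following the approach of \cite{MZ1}. I would write $A=\cO(G)$ and $B=\cO(X)$, viewed as a right $A$-comodule super-algebra via \eqref{eq:rho}, so that $\cO(X)^{G}=B^{\mathrm{co}\,A}$. Bosonization replaces the pair $(A,B)$ by an ordinary (non-commutative) Hopf algebra $\hat{A}$ with bijective antipode together with an ordinary right $\hat{A}$-comodule algebra $\hat{B}$, in such a way that coinvariants are preserved, the canonical map $\beta$ of \eqref{eq:beta} corresponds to its ordinary analogue $\hat{\beta}$, surjectivity and bijectivity of $\beta$ transfer across, (faithful) flatness over the coinvariants translates literally, and---by the injectivity lemma immediately following Lemma \ref{lem:basic_super-module}---$B$ is injective as an $A$-super-comodule if and only if $\hat{B}$ is injective as an $\hat{A}$-comodule.

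Once inside the ordinary Hopf-algebraic world, I would invoke Schneider's theorem to obtain (b) $\Leftrightarrow$ (c): for a comodule algebra $\hat{B}$ over a Hopf algebra $\hat{A}$ with bijective antipode, surjectivity of $\hat{\beta}$ together with injectivity of $\hat{B}$ as an $\hat{A}$-comodule is equivalent to $\hat{B}$ being faithfully flat over $\hat{B}^{\mathrm{co}\,\hat{A}}$ with $\hat{\beta}$ bijective; transporting through the bosonization dictionary yields the equivalence in the super setting. For (c) $\Rightarrow$ (a) and the identification of $X\tilde{\tilde{/}}G$ with $\op{Sp}\cO(X)^{G}$, I would apply faithfully flat descent to the exact row
\[
\cO(X)^{G}\longrightarrow \cO(X)\rightrightarrows \cO(X)\otimes_{\cO(X)^{G}}\cO(X)\xrightarrow{\ \sim\ } \cO(X)\otimes \cO(G),
\]
from which it follows that for every $R\in \mathsf{SAlg}_{k}$ the set $X(R)/G(R)$ embeds into $\mathsf{SAlg}_{k}(\cO(X)^{G},R)$ with the expected equalizer property, so that $\op{Sp}\cO(X)^{G}$ satisfies the defining exact diagram \eqref{EEXACT} of the dur sheaf $X\tilde{\tilde{/}}G$. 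For the converse (a) $\Rightarrow$ (c), the same descent diagram forces the coordinate super-algebra of an affine $X\tilde{\tilde{/}}G$ to coincide with $\cO(X)^{G}$ and supplies both the fpqc property of $\cO(X)^{G}\hookrightarrow \cO(X)$ and the bijectivity of $\beta$.

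For part (2), assume $\cO(X)$ is Noetherian and $\cO(G)$ is finitely generated. Since $\beta$ identifies $\cO(X)\otimes_{\cO(X)^{G}}\cO(X)$ with $\cO(X)\otimes \cO(G)$, which is finitely generated as a left $\cO(X)$-module, faithfully flat descent upgrades the fpqc covering $\cO(X)^{G}\to \cO(X)$ to an fppf covering, whence the exact diagram \eqref{EEXACT} already holds at the sheaf level and $X\tilde{\tilde{/}}G=X\tilde{/}G$. The Noetherian property of $\cO(X)^{G}$ is then inherited from that of $\cO(X)$ via descent of the ascending chain condition on homogeneous super-ideals. The hardest part I expect is the careful bookkeeping of the bosonization dictionary---in particular, verifying that the super-symmetric twist built into \eqref{eq:alpha} corresponds precisely to the ordinary canonical Galois map of $\hat{B}$, and that the bijectivity of the antipode of $\hat{A}$ follows cleanly from super-commutativity of $A$---so that Schneider's theorem is genuinely applicable and its conclusion can be transported back to the super setting without loss.
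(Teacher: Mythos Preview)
Your plan is correct and matches the paper's own treatment: the theorem is quoted from \cite{MZ1}, whose proof proceeds exactly by bosonizing to $(\hat{A},\hat{B})$ and invoking Schneider's theorem for (b)\,$\Leftrightarrow$\,(c), while (a)\,$\Leftrightarrow$\,(c) is handled by faithfully flat descent precisely as in Remark~\ref{rem:over_ring}. One small slip to fix in Part~(2): since $G$ is only assumed \emph{algebraic}, $\cO(X)\otimes\cO(G)$ is finitely generated over $\cO(X)$ as a super-\emph{algebra}, not as a module; descent of finite presentation for algebras along the fpqc covering $\cO(X)^{G}\hookrightarrow\cO(X)$ (together with Noetherianity) then yields the fppf property you need.
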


\begin{rem}\label{rem:over_ring}
Suppose that $k$ is a non-zero commutative ring, and an affine super-group $G$ 
acts freely on an affine super-scheme $X$. 
Let $Y$ denote the affine super-scheme represented by $\cO(X)^{G}$. 
Then Conditions (a) and (c) above remain to be equivalent. Moreover,
if they are satisfied, then $X \tilde{\tilde{/}} G=Y$. To see this, assume (a), first.
The same argument as proving \cite[Part I, 5.7, (1)]{J} shows that the natural morphism 
$X \to X \tilde{\tilde{/}} G$ is faithfully flat
(or namely, $\cO(X) \hookleftarrow \cO(X \tilde{\tilde{/}} G)$ 
is an fpqc covering), and the morphism of $k$-super-functors
\begin{equation}\label{eq:XG_XYX}
X\times G \to X \times_{X\tilde{\tilde{/}}G} X,\quad (x,g)\mapsto (x,xg)
\end{equation}
is isomorphic. This implies 
(c) together with $X \tilde{\tilde{/}} G=Y$. Next, assume (c).
With every dur sheaf applied to the exact diagram of super-algebras
\[ \cO(Y) \to \cO(X) \rightrightarrows \cO(X) \otimes_{\cO(Y)} \cO(X)\ (\simeq \cO(X\times G)) \]
associated with the fpqc covering $\cO(Y)\hookrightarrow \cO(X)$, the characterization 
of $X \tilde{\tilde{/}} G$ given by \eqref{EEXACT} shows 
$X \tilde{\tilde{/}} G=Y$, which ensures (a). 

In the same situation as in (2) above, we see also that if the equivalent Conditions (a) and (c)
are satisfied, then $X \tilde{\tilde{/}} G$, $X \tilde{/} G$ and $Y$ coincide, and they are
Noetherian. 
\end{rem}

In the situation of Remark \ref{rem:over_ring} above
(in particular, over a non-zero commutative ring $k$),
assume that the equivalent Conditions (a) and (c) are satisfied. 
Then $X \to Y\, (=X \tilde{\tilde{/}} G)$ is a faithfully flat morphism
of affine super-schemes, 
and \eqref{eq:XG_XYX} is isomorphic. 

\begin{definition}\label{DTOR}
In this case we say that $X\to Y$
is a $G$-super-torsor.
\end{definition}

This may be alternatively called an \emph{algebraic
principal super-bundle} with structure super-group $G$.
The term ``super-torsor" comes from \cite[III. $\S$4]{DG}, 
which defines the notion in a rather
generalized situation of the non-super setting. 

\begin{rem}\label{rem:torsor}
$G$-super-torsors are a subject of interest for further study, too. 
Some subsequent results by Oe and the first-named author \cite{MOe} will be presented without proof in Section \ref{subsec:example},
to compare with our main result, Theorem \ref{thm:affinity_integral}.
We remark that the above-given definition of super-torsors is restricted 
to the situation where $X$ and $Y$ are affine super-schemes; in fact, the 
subject is discussed in the present paper only in the restricted situation,
whereas in the just cited \cite{MOe} generally when $X$ and $Y$ are 
(not necessarily affine) super-schemes.
\end{rem}

\subsection{Bosonization technique}\label{subsec:boson_tech}
Here we recall from \cite[Section 10]{MZ1} the technique, which was used to prove 
Theorem \ref{thm:affinity} above, and will play a role in the sequel. 

Suppose that $k$ is a non-zero commutative ring. 
Present $\mathbb{Z}_2$ as a multiplicative group so as
\begin{equation}\label{eq:Z2}
\mathbb{Z}_2 = \langle \sigma \mid \sigma^2=e \rangle. 
\end{equation}
Given a super-algebra $A$ which may not be super-commutative, 
let the generator $\sigma$ of $\mathbb{Z}_2$ act on $A$ so that
\[ \sigma \cdot a = (-1)^{|a|} a,\quad a \in A, \]
and let $A \rtimes \mathbb{Z}_2$ denote the resulting algebra of semi-direct (or smash) product. 
Dually, if $A=(A,\delta,\epsilon)$ is a super-coalgebra, then the coalgebra $A \lcosmash \mathbb{Z}_2$
of smash co-product is constructed on the tensor product $A\otimes k \mathbb{Z}_2$ by 
the structure maps
\[
\Delta(a\otimes \sigma^i) = \sum_j (b_j \otimes \sigma^{i+|c_j|}) \otimes (c_j\otimes \sigma^i),\quad
\varepsilon(a \otimes \sigma^i)=\epsilon(a), 
\]
where $i \in \{ 0,1\}$,\ $a \in A$ and $\delta(a)=\sum_jb_j\otimes c_j$. 

Suppose that $A$ is a Hopf super-algebra which may not be super-commutative. 
The \emph{bosonization} $A \lboson \mathbb{Z}_2$ 
of $A$ is the ordinary Hopf algebra defined on $A \otimes k \mathbb{Z}_2$
which is $A \rtimes \mathbb{Z}_2$ as an algebra,
and is $A \lcosmash \mathbb{Z}_2$ as a coalgebra. The antipode $S$ is given by 
\[
S(a \otimes \sigma^i)= (-1)^{|a|(i+1)}s(a)\otimes \sigma^{i+|a|},\quad a \in A,\ i \in \{ 0,1\},
\]
where $s$ is the antipode of $A$. This $S$ is bijective if $s$ is; this is the case if $A$ is super-commutative, and hence
$s$ is an involution. But even under the assumption, $S$ is not necessarily an involution. 
(A historical remark: the construction was originally done by Radford \cite{R} under the name ``bi-product" in the generalized situation
where $\mathbb{Z}_2$, or the group algebra $k \mathbb{Z}_2$, is replaced by an arbitrary Hopf algebra.) 

Suppose that we are in the situation of Section \ref{subsec:action}, so that  
an affine super-group $G$ acts on an affine super-scheme $X$. 
We set
\[
A=\cO(G),\quad B=\cO(X),\quad C= B^G,\quad 
\hat{A}=A \lboson \mathbb{Z}_2,\quad \hat{B}=B \rtimes \mathbb{Z}_2. 
\]
The structure map $\rho : B \to B \otimes A$ given in \eqref{eq:rho} gives rise to the algebra
map
\[
\hat{\rho} : \hat{B}\to \hat{B}\otimes \hat{A},\quad 
\hat{\rho}(b\otimes \sigma^i)= \sum_j (b_j \otimes \sigma^{i+|a_j|})\otimes (a_j\otimes \sigma^i), 
\]
where $i \in \{0,1\}$, $b \in B$ and $\rho(b)=\sum_j b_j \otimes a_j$. Moreover, $(\hat{B},\hat{\rho})$ 
is a right $\hat{A}$-comodule (algebra), and the subalgebra 
\[
\hat{B}^{\mathsf{co}\hspace{0.3mm}\hat{A}}=\{ b \in \hat{B}\mid \hat{\rho}(b)=b \otimes 1\}
\]
of $\hat{A}$-co-invariants in $\hat{B}$
coincides with $C\, (=C\otimes k)$. This and the following are proved in \cite[Proposition 10.3]{MZ1}
in the generalized situation noted above. 

\begin{lemma}\label{lem:bijective_beta}
The map $\beta : B \otimes_C B \to B \otimes A$ defined in \eqref{eq:beta} is surjective \textup{(}resp., bijective\textup{)}
if and only if the map
\[
\hat{\beta} : \hat{B} \otimes_C \hat{B}\to \hat{B}\otimes \hat{A},\quad
\hat{\beta}(a \otimes b)=a\, \hat{\rho}(b) 
\]
surjective \textup{(}resp., bijective\textup{)}. 
\end{lemma}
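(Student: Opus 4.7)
The plan is to establish a commutative square of $k$-modules
\[
\xymatrix{
\hat B\otimes_C \hat B\ar[r]^-{\hat\beta}\ar[d]_-{\Phi} & \hat B\otimes \hat A\ar[d]^-{\Psi}\\
(B\otimes_C B)\otimes k\mathbb{Z}_2\otimes k\mathbb{Z}_2 \ar[r]^-{\beta\otimes\mathrm{id}} & (B\otimes A)\otimes k\mathbb{Z}_2\otimes k\mathbb{Z}_2
}
\]
in which the vertical arrows $\Phi$ and $\Psi$ are $k$-linear isomorphisms. Granted this, the conclusion is immediate: since $k\mathbb{Z}_2$ is $k$-free of rank two, hence faithfully flat over $k$, the bottom arrow is surjective (respectively, bijective) if and only if $\beta$ is, and the vertical bijections transfer the same equivalences to $\hat\beta$.

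The construction of $\Phi$ and $\Psi$ requires only that one fix the correct sign twists; all four corners are already known to coincide as $k$-modules with $B\otimes_C B\otimes(k\mathbb{Z}_2)^{\otimes 2}$ or $B\otimes A\otimes(k\mathbb{Z}_2)^{\otimes 2}$. The subtle point for $\Phi$ is that $\hat B$ is a right $C$-module via the twisted rule $(b\otimes\sigma^i)c=(-1)^{i|c|}bc\otimes\sigma^i$; the prescription
\[
\Phi\bigl((b_1\otimes\sigma^i)\otimes_C(b_2\otimes\sigma^j)\bigr)=(-1)^{i|b_2|}(b_1\otimes_C b_2)\otimes\sigma^i\otimes\sigma^j
\]
is readily checked to be well-defined on the balanced tensor product and to admit an obvious inverse. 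For $\Psi$, the idea is to undo the $\sigma$-shift produced by $\hat\rho$ on the $\hat B$-factor; setting
\[
\Psi\bigl((b\otimes\sigma^{i'})\otimes(a\otimes\sigma^{j'})\bigr)=(-1)^{(i'-j'-|a|)|a|}(b\otimes a)\otimes\sigma^{i'-j'-|a|}\otimes\sigma^{j'}
\]
yields a plainly bijective $k$-linear map. The sign exponent $|b_2|$ (rather than $|b_1|$) in $\Phi$ is forced because only the difference $|b_2|-|b_{2,m}|=|a_m|$ is visible to $\Psi$, which sees only the $\hat A$-factor.

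Commutativity is then a short direct computation. If $\rho(b_2)=\sum_m b_{2,m}\otimes a_m$, then the product $(b_1\otimes\sigma^i)\hat\rho(b_2\otimes\sigma^j)$ expands to $\sum_m (-1)^{i|b_{2,m}|}(b_1b_{2,m}\otimes\sigma^{i+j+|a_m|})\otimes(a_m\otimes\sigma^j)$; applying $\Psi$ contributes the further sign $(-1)^{i|a_m|}$ and normalises the $\sigma$-part to $\sigma^i\otimes\sigma^j$, so the net sign on each summand is $(-1)^{i(|b_{2,m}|+|a_m|)}=(-1)^{i|b_2|}$ by degree-preservation of $\rho$. This is exactly the sign produced by $(\beta\otimes\mathrm{id})\circ\Phi$ on the same generator. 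I expect the only real obstacle in writing the proof to be this sign bookkeeping; conceptually nothing beyond the definitions of $\hat A$, $\hat B$, $\hat\rho$ and the faithful flatness of $k\mathbb{Z}_2$ over $k$ is required.
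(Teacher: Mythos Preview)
Your argument is correct: the commutative square with vertical isomorphisms $\Phi$, $\Psi$ does the job, and your sign bookkeeping (using $|b_{2,m}|+|a_m|=|b_2|$ from the degree-preservation of $\rho$) checks out. The paper itself does not prove this lemma but merely cites \cite[Proposition~10.3]{MZ1}, where the statement is established in greater generality; your direct verification is exactly the kind of argument one expects to find there, so there is no meaningful difference of approach to report.
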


This lemma will be used to deduce new results in the super situation from  
known ones on ordinary Hopf algebras, which include some important ones by Schauenburg and Schneider \cite{SS}, in particular.

\subsection{Quotients by finite super-groups}\label{subsec:finite_supergroup}

Suppose that $k$ is a non-zero commutative ring. An affine super-group $G$ is said to be \emph{finite}
if $\cO(G)$ is finitely generated projective as a $k$-module. The following theorem, a super-analogue
of Grothendieck's Theorem, generalizes 
Theorem 0.1 of Zubkov \cite{Z2}, who assumes that $k$ is a field. 
Our proof using the bosonization technique is different from and simpler than the one in \cite{Z2}. 

\begin{theorem}\label{thm:finite}
Suppose that a finite affine super-group $G$ acts freely on an affine super-scheme on $X$. 
Then $\cO(X)^G \hookrightarrow \cO(X)$ is an fppf covering. The dur sheaf $X \tilde{\tilde{/}} G$
and the sheaf $X \tilde{/} G$ coincide, and they are in fact the affine super-scheme represented by $\cO(X)^{G}$. 
Moreover, $X\to X \tilde{\tilde{/}} G\, (=X \tilde{/} G)$
is a $G$-super-torsor.
\end{theorem}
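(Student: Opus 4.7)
The plan is to use the bosonization technique of Section \ref{subsec:boson_tech} to reduce to an ordinary Hopf-Galois extension, to which the Kreimer-Takeuchi Theorem \cite{KT}---extended to commutative base rings by Schauenburg-Schneider \cite{SS}---then applies. Since $G$ is finite, $\cO(G)$ is finitely generated as a $k$-module, so by Proposition \ref{PFree} the given free action is automatically strongly free; equivalently, the super-algebra map $\beta$ of \eqref{eq:beta} is surjective. I adopt the notation $A=\cO(G)$, $B=\cO(X)$, $C=B^{G}$, $\hat{A}=A\lboson\mathbb{Z}_2$ and $\hat{B}=B\rtimes\mathbb{Z}_2$ of Section \ref{subsec:boson_tech}. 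Then $\hat{A}$ is again finitely generated projective as a $k$-module and has bijective antipode---the latter because the antipode of the super-commutative Hopf super-algebra $A$ is an involution, as recalled in Section \ref{subsec:boson_tech}---and by Lemma \ref{lem:bijective_beta} the map $\hat{\beta}:\hat{B}\otimes_C\hat{B}\to\hat{B}\otimes\hat{A}$ is surjective.

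Next I apply the Kreimer-Takeuchi / Schauenburg-Schneider Theorem to the $\hat{A}$-comodule algebra $\hat{B}$: surjectivity of $\hat{\beta}$ automatically promotes to bijectivity, and $\hat{B}$ becomes faithfully flat as a $C$-module. Transferring back to the super setting, Lemma \ref{lem:bijective_beta} gives bijectivity of $\beta$, while the $C$-module decomposition $\hat{B}=B\oplus B\sigma\cong B\oplus B$ shows that $B$ itself is faithfully flat over $C$. Hence Condition (c) of Theorem \ref{thm:affinity}(1) is satisfied, and Remark \ref{rem:over_ring} identifies $X\tilde{\tilde{/}}G$ with the affine super-scheme represented by $C=\cO(X)^{G}$ and shows that $X\to X\tilde{\tilde{/}}G$ is a $G$-torsor in the sense of Definition \ref{DTOR}.

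It remains to upgrade the fpqc covering $C\hookrightarrow B$ to an fppf one, after which the same exact diagram \eqref{EEXACT} that characterizes $X\tilde{\tilde{/}}G$ also characterizes the sheaf $X\tilde{/}G$, forcing their coincidence. The Hopf-Galois isomorphism $\beta:B\otimes_C B\overset{\simeq}{\to} B\otimes A$ exhibits $B\otimes_C B$ as a finitely presented $B$-module through the right factor---since $A$ is finitely generated projective over $k$---so faithfully flat descent along $C\hookrightarrow B$ yields that $B$ is finitely presented as a $C$-module, and hence as a $C$-super-algebra. The main potential obstacle in this plan is securing the Kreimer-Takeuchi conclusion (bijectivity of the Galois map together with faithful flatness of the extension) over an arbitrary commutative base ring rather than a field, and this is precisely what the Schauenburg-Schneider results are designed to provide.
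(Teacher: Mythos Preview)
Your approach matches the paper's: bosonize, apply Kreimer--Takeuchi to the ordinary Hopf--Galois data $(\hat{A},\hat{B},C)$, and transfer back via Lemma~\ref{lem:bijective_beta} and Remark~\ref{rem:over_ring}. Two small points deserve care. First, Kreimer--Takeuchi (already formulated over a commutative base ring in \cite{KT}, so no appeal to \cite{SS} is needed here) yields that $\hat{B}$ is finitely generated \emph{projective} over $C$, not directly faithfully flat; the paper closes this gap by observing that $B$ is a faithful $C$-module---clear, since $C\subset B$---and invoking Lemma~\ref{lem:generator} to promote ``finitely generated projective and faithful'' to ``generator''. You should either cite a precise statement in \cite{SS} giving faithful flatness, or insert this one-line argument. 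Second, your final clause ``and hence as a $C$-super-algebra'' is exactly the nontrivial implication of the Fact that the paper states and proves within its argument; it is not automatic and should be acknowledged. Your descent route to finite presentation of $B$ over $C$ is a legitimate alternative to the paper's, which reads it off directly from Kreimer--Takeuchi (finitely generated projective, hence finitely presented).
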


\begin{proof}
Let us use the same notation as in the preceding subsection. By Remark \ref{rem:over_ring} 
it suffices to prove that 
(i)~$C \hookrightarrow B$ is an fppf covering, and (ii)~$\beta$ is bijective. 

Notice from the assumptions and Proposition \ref{PFree} that $\beta$ is surjective, 
whence $\hat{\beta}$ is, too. Moreover,
the Hopf algebra $\hat{A}$ is finitely generated projective as a $k$-module. 
It follows by the Kreimer-Takeuchi
Theorem \cite{KT} that 
(iii)~$\hat{B}$ is finitely generated projective, or equivalently, finitely presented flat, as a (left and right) $C$-module, and 
(iv)~$\hat{\beta}$ is bijective.
By Lemma \ref{lem:bijective_beta}, (iv) ensures (ii). 

By Lemma \ref{lem:generator}, $C \ \hookrightarrow B$ splits $C$-linearly.
This, combined with the flatness of (iii), shows that $C \hookrightarrow B$ is an fpqc covering.  
The remaining (i) follows from the ``only if" part of the following (cf.~\cite[Remark 1.1]{Z2}):

\begin{fact}
In general, given a map $C \to B$ of super-rings,
$B$ is finitely presented as a $C$-module if and only if $B$ is finitely generated as a $C$-module and finitely presented 
as a $C$-super-algebra. 
\end{fact} 

This can be proved essentially in the same way as in the non-super situation;
see \cite[Proof of (B12), p.570]{GW}, for example. 
In fact, the now needed ``only if" is proved as follows
(in a slightly different 
manner from the one of the article cited above).
Assume that $B$ is finitely presented as a $C$-module.
Then we have an exact sequence 
\[
P\overset{f}{\longrightarrow} Q \overset{g}{\longrightarrow} B \to 0
\]
of $C$-super-modules, where $P$ and $Q$ are direct
sums of some finitely many copies of $C$ or $C[1]$. 
Let $b_q$ be the elements of 
$B$ which are the images of the canonical $C$-free basis elements of $Q$ under $g$.
Let $C'=\mathbb{Z}[c_{pq}, d_{qr}^{s}, e_q]$ denote the $\mathbb{Z}$-super-subalgebra of $C$ generated by 
all $c_{pq}, d_{qr}^s, e_q$ (finitely many homogeneous elements), where $c_{pq}$ are the entries
of the matrix which presents $f:P\to Q$ with respect to the canonical $C$-free bases, and
$d_{qr}^s$ and $e_q$ are elements of $C$ chosen so that they satisfy
\[
b_qb_r=\sum_{s}d_{qr}^sb_s,\quad \sum_q e_q b_q=1.
\]
Define $B':=\sum_qC'b_q$ in $B$. 
Then $B'$ is a finitely generated super-algebra over the Noetherian super-algebra 
$C'$, which is, therefore, finitely presented. We have a natural surjection 
\[
h : B'\otimes_{C'} C \to B
\]
of $C$-super-algebras. We claim that this $h$ is an isomorphism; the claim implies
that $B$ is finitely presented as a $C$-super-algebra, as desired. The map $f$ is the base extension 
of a canonical $C'$-form $f' :P'\to Q'$, which, composed with the natural surjection $Q'\to B'$, turns into a zero map.
The induced  (surjective) map $\operatorname{Coker}(f')\to B'$, after base extension,
turns into $\operatorname{Coker}(f')\otimes_{C'}C\to B'\otimes_{C'}C$,
which turns, composed with $h$, to coincide with a canonical isomorphism $\operatorname{Coker}(f')\otimes_{C'}C\overset{\simeq}{\longrightarrow} B$.
This implies the claim. 
\end{proof}

\section{Integrals for affine super-groups}\label{sec:integral}

Throughout in this section we suppose that $k$ is a field, and let $G$ be an affine super-group
over $k$; it will be often assumed to be algebraic. 
We let $\bar{k}$ denote the algebraic closure of $k$, 
and $G_{\bar{k}}$ denotes the base extension of $G$ to $\bar{k}$.

\subsection{Preliminary results}\label{subsec:preliminary_integral}
We let
\begin{equation}\label{eq:A}
A := \cO(G)
\end{equation}
denote the Hopf super-algebra which represents $G$, as before. 
Following \cite{SZ}, a \emph{left integral} for $G$ is defined to be an element
$\phi$ in the dual algebra $A^*$ of the coalgebra $A$, such that
\[ \xi \, \phi = \xi(1)\, \phi,\quad \xi \in A^*. \]
Such an element is identified with a left (not necessarily $\mathbb{Z}_2$-graded) $A$-comodule map 
$\phi : A \to k$,
where $k$ is regarded as a trivial $A$-comodule. A \emph{right integral} for $G$ is defined analogously.  
The left integrals and  the right integrals
are in one-to-one correspondence, through the dual $s^*$ of the antipode $s$ of $A$.
Let $\hat{A}=A \lboson \mathbb{Z}_2$ denote
the Hopf algebra of bosonization constructed in
Section \ref{subsec:boson_tech}. 

In general, given a coalgebra $C$, we let
\begin{equation}\label{eq:HomC}
\operatorname{Hom}^{-C}(C, k)
\end{equation}
denote the vector space of right $C$-comodule maps $C\to k$. We now use this notation
when $C=A$\ or $\hat{A}$. 

\begin{prop}[\text{\cite[Theorem 1]{SZ0}}]\label{prop:integral_boson}
There is a natural $k$-linear isomorphism
\[
\operatorname{Hom}^{-A}(A, k)\simeq \operatorname{Hom}^{-\hat{A}}(\hat{A}, k).
\]
\end{prop}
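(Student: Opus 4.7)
The plan is to exhibit an explicit natural bijection reflecting a ``concentration on the grouplike $e$'' phenomenon: a right $\hat{A}$-comodule map $\Phi:\hat{A}\to k$ is forced to vanish on $A\otimes \sigma$ and is then determined by its restriction to $A\otimes e\simeq A$. Concretely, I would define
\[
\Psi:\operatorname{Hom}^{-A}(A,k)\to \operatorname{Hom}^{-\hat{A}}(\hat{A},k)
\]
by $\Psi(\phi)(a\otimes e):=\phi(a)$ and $\Psi(\phi)(a\otimes \sigma):=0$, with candidate inverse $\Phi\mapsto \bigl(a\mapsto \Phi(a\otimes e)\bigr)$.

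To check that $\Psi(\phi)$ really is a right $\hat{A}$-comodule map, I would evaluate the comodule identity on a homogeneous element $a\otimes \sigma^i$ using the smash-coproduct formula $\Delta(a\otimes \sigma^i)=\sum_j(b_j\otimes \sigma^{i+|c_j|})\otimes(c_j\otimes \sigma^i)$ (where $\delta(a)=\sum_jb_j\otimes c_j$) and split into the cases $i=0,1$. For $i=0$, the prescribed vanishing of $\Psi(\phi)$ on $A\otimes \sigma$ kills every term with $|c_j|=1$ and the equation reduces to the even part $\sum_{j:|c_j|=0}\phi(b_j)c_j=\phi(a)\cdot 1_A$ of the $A$-comodule identity for $\phi$. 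For $i=1$, the left-hand side lies in $A\otimes \sigma$ while the right-hand side $\Psi(\phi)(a\otimes \sigma)(1_A\otimes e)$ lies in $A\otimes e$; both must vanish separately, which yields $\Psi(\phi)(a\otimes \sigma)=0$ (tautological) and the odd part $\sum_{j:|c_j|=1}\phi(b_j)c_j=0$, automatic because $\phi(a)\cdot 1_A$ is even.

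For the converse, the same $i=1$ identity applied to an arbitrary right $\hat{A}$-comodule map $\Phi$ forces $\Phi(a\otimes \sigma)=0$ by comparing the disjoint $\mathbb{Z}_2$-components of $\hat{A}$, and then the $i=0$ identity, together with the odd-vanishing just derived, yields the full right $A$-comodule condition $\sum_j\phi(b_j)c_j=\phi(a)\cdot 1_A$ for $\phi(a):=\Phi(a\otimes e)$. The two constructions are mutually inverse by inspection, and natural in $A$ because both are built solely from $\delta$, $\epsilon$, and the $\mathbb{Z}_2$-grading of $A$.

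The main technical point is the concentration step above: any right $\hat{A}$-comodule map into the trivial comodule $k$ must be supported on the grouplike $e\in k\mathbb{Z}_2$. This is forced by the structural fact that $1_{\hat{A}}=1_A\otimes e$ lives in the even $\mathbb{Z}_2$-component of $\hat{A}$, whereas $\Delta(a\otimes \sigma)$ is supported in the odd component. Reassuringly, no sign calculus intervenes: the bosonized coalgebra structure carries no signs, so the entire verification amounts to bookkeeping of the $\mathbb{Z}_2$-grading.
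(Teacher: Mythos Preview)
Your proposal is correct and follows essentially the same approach as the paper: both exhibit the same explicit bijection $\psi\mapsto\psi\otimes\omega$ (with $\omega(\sigma^i)=\delta_{i,0}$) and inverse $\Phi\mapsto(a\mapsto\Phi(a\otimes e))$, hinging on the fact that any $\Phi\in\operatorname{Hom}^{-\hat{A}}(\hat{A},k)$ vanishes on $A\otimes\sigma$. The paper's verification is slightly more conceptual---it factors $\psi\otimes\omega$ through the co-tensor product $\hat{A}=A\square_A\hat{A}\xrightarrow{\psi\square_A\mathrm{id}} k\square_A\hat{A}=k\mathbb{Z}_2\xrightarrow{\omega}k$ and deduces the vanishing from $k\mathbb{Z}_2\subset\hat{A}$ being a Hopf subalgebra---whereas your direct case-split on $i\in\{0,1\}$ achieves the same end by elementary bookkeeping.
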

\pf
We suppose $\mathbb{Z}_2=\langle \sigma\mid \sigma^2=e \rangle$ as in \eqref{eq:Z2}, and define 
\[
\omega :k \mathbb{Z}_2\to k, \quad \omega(\sigma^i)=\delta_{i,0}, \ i\in \{0,1\}. 
\] 
This $\omega$ is a unique (up to scalar multiplication) right $k \mathbb{Z}_2$-comodule map; it is necessarily a right $\hat{A}$-comodule map
since $k\mathbb{Z}_2$ is a Hopf subalgebra of $\hat{A}$.  We add the remark: by the same reason every right $\hat{A}$-comodule
map $\phi : \hat{A} \to k$ vanishes on $A\otimes \sigma$ in $\hat{A}=A\otimes k \mathbb{Z}_2$. Indeed, given $a \in A$, 
the map $k\mathbb{Z}_2\to k$, $x \mapsto \phi(a\otimes x)$ is a right $k\mathbb{Z}_2$-comodule map, whence it vanishes at $\sigma$. 

Given $\psi \in \operatorname{Hom}^{-A}(A, k)$, define $\phi : \hat{A}=A\otimes k\mathbb{Z}_2\to k$ by
$\phi:=\psi\otimes \omega$. This $\phi$ coincides with the composite
\[ \hat{A}=A\square_A\hat{A} \overset{\psi \square_A \mathrm{id}_{\hat{A}}}{\longrightarrow} k \square_A\hat{A}
=k\mathbb{Z}_2\overset{\omega}{\longrightarrow} k\]
of the co-tensor product $\psi \square_A \mathrm{id}_{\hat{A}}$
(see \cite[Section 2.3]{DNR}) with $\omega$. 
Hence we have $\phi \in \operatorname{Hom}^{-\hat{A}}(\hat{A}, k)$. 
The assignment $\psi \mapsto \phi$ gives a desired isomorphism.
Indeed, one sees by using the remark above that the inverse assigns to each $\phi$, the map $A \to k, a \mapsto 
\phi(a\otimes e)\, (=\phi(a\otimes (e+\sigma)))$. 
\epf

Clearly, the proposition holds for any arbitrary Hopf super-algebra that may not be super-commutative,
and the result is essentially shown by Scheunert and Zhang \cite{SZ0} by essentially the same method. 
The proof of ours, which uses the co-tensor product, might be slightly simpler. 

\begin{corollary}
A non-zero left or right integral for $G$, 
if it exists, is unique up to scalar multiplication.   
\end{corollary}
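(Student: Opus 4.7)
The plan is to derive the corollary immediately from Proposition \ref{prop:integral_boson} combined with the classical uniqueness theorem for integrals on ordinary Hopf algebras. As noted in the excerpt, a right integral for $G$ is precisely a right $A$-comodule map $A\to k$, i.e., an element of $\operatorname{Hom}^{-A}(A,k)$. By Proposition \ref{prop:integral_boson}, this space is naturally identified with $\operatorname{Hom}^{-\hat A}(\hat A,k)$, which in turn is nothing but the space of right integrals on the ordinary Hopf algebra $\hat A = A \lboson \mathbb{Z}_2$.

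Next, I would invoke the classical theorem (due to Sullivan, with the finite-dimensional case going back to Sweedler) asserting that for any Hopf algebra over a field the space of left (respectively, right) integrals is at most one-dimensional. Applied to $\hat A$, this gives $\dim \operatorname{Hom}^{-\hat A}(\hat A,k) \le 1$, hence $\dim \operatorname{Hom}^{-A}(A,k) \le 1$, which is the desired uniqueness statement for right integrals for $G$.

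To settle the left case, I would appeal to the bijection already recalled in the excerpt: the dual $s^{*}$ of the antipode of $A$ interchanges left and right integrals. Uniqueness of right integrals therefore transfers immediately to uniqueness of left integrals, completing the proof.

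The substantive input lies entirely in the preceding Proposition \ref{prop:integral_boson} and in the cited classical uniqueness result. One small point worth highlighting is that $\hat A$ is typically non-commutative even though $A$ is super-commutative, so the full general form of the Hopf-algebra uniqueness theorem is genuinely needed; arguments tailored to commutative (Hopf) algebras would not suffice. Accordingly, no real technical obstacle arises—the only care required is the bookkeeping of left versus right conventions through the antipode and through the bosonization.
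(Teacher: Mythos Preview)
Your proposal is correct and follows essentially the same route as the paper: both derive the corollary by combining Proposition~\ref{prop:integral_boson} with Sullivan's uniqueness theorem for integrals on ordinary Hopf algebras. Your added remarks on transferring between left and right integrals via $s^*$ and on the non-commutativity of $\hat A$ are accurate elaborations of what the paper leaves implicit.
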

\pf
This follows by the last proposition combined with the well-known uniqueness for ordinary Hopf
algebras \cite{Su1} due to Sullivan; see also \cite[Theorem 5.4.2]{DNR}. 
\epf

It follows that a non-zero left or right integral for $G$ is homogeneous. To be more explicit, 
the homogeneous components $\phi_0$, $\phi_1$ of a (left or right) integral $\phi \in A^*$ 
are integrals, whence $\phi=\phi_0$ or $\phi=\phi_1$. This means that
the $A$-comodule map $\phi : A\to k$ vanishes on $A_1$ or on $A_0$; see \cite[Theorem 1]{SZ0}, again.

\begin{example}[\text{\cite[Example 1]{SZ0}}]\label{ex:exterior}
Let $W$ be a vector space of dimension $n<\infty$, and let $\wedge(W)$ denote the exterior 
algebra on $W$, which we regard as a (super-commutative and super-cocommutative) Hopf super-algebra
with all elements $w$ in $W$ odd primitives, $\Delta(w)=1\otimes w+w\otimes 1,\ \varepsilon(w)=0$. 
Choose arbitrarily a basis $w_1, w_2,\dots,w_n$ of $W$. 
Let
\begin{equation}\label{eq:increasing_sequence}
\Lambda_n:=\{\, I=(i_1,i_2,\dots,i_r)\mid \ 0\le i_1<i_2<\dots <i_r\le n,\ 0\le r \le n \, \}
\end{equation}
denote the set of all strictly increasing sequences of positive integers $\le n$. 
Then this $\wedge(W)$ has 
\[ w_I=w_{i_1}\wedge w_{i_2}\wedge\dots\wedge w_{i_r},\quad I=(i_1,i_2,\dots,i_r)\in \Lambda_n \]
as a basis. One sees that the $k$-linear map
\begin{equation*}
\phi : \wedge(W)\to k,\quad \phi(w_I)=
\begin{cases} 1 &\text{if}\ \, I=(1,2,\dots,n); \\ 
0 &\text{otherwise}\end{cases}
\end{equation*}
is a non-zero left and right integral for the finite affine super-group represented by $\wedge(W)$,
which is even (resp., odd) if $n\, (=\dim W)$ is even (resp., odd). 
\end{example}

\begin{definition}\label{def:G_with_integral}
We say that $G$ \emph{has an integral} or $G$ is an \emph{affine super-group with integral}, 
if $G$ has a non-zero left or right 
integral. This is equivalent to saying that $A\, (=\cO(G))$ is (left or/and right) co-Frobenius as a coalgebra; 
see \cite[Section 5.3]{DNR}. 
We say that $G$ is \emph{unimodular}, 
if it has a non-zero, left and at the same time right 
integral. 
\end{definition}


Left and right $A$-super-comodules are naturally identified. To be more precise, given a super-vector space $V$, 
the left and the right $A$-super-comodule structures on $V$ are in one-to-one correspondence, by twisting the side
through 
$V \otimes A\overset{\operatorname{id}\otimes s}{\longrightarrow}V\otimes A\overset{c_{V,A}}{\longrightarrow}A\otimes V$,
where $s$ denotes the antipode of $A$.  
A left (resp., right) $A$-super-comodule is naturally identified with a left (resp., right) $\hat{A}$-comodule.
One may understand that a \emph{left} (resp., \emph{right}) $G$-\emph{super-module} is by definition a right (resp., left) $A$-super-comodule. 
We may and do choose \emph{left} $G$-super-modules, and denote their category by $G$-$\mathsf{SMod}$. 
This is a $k$-linear abelian, symmetric category with enough injectives. In view of Proposition \ref{prop:integral_boson} the next proposition follows
by applying to our $\hat{A}$, the characterization \cite[Theorem 3.2.3]{DNR} for a coalgebra to be co-Frobenius.

\begin{prop}\label{prop:characterize}
For an affine super-group $G$, the following are equivalent:
\begin{itemize}
\item[(a)] $G$ has an integral;
\item[(b)] $G$-$\mathsf{SMod}$ has enough projectives; 
\item[(c)] Every injective object in $G$-$\mathsf{SMod}$ is projective;
\item[(d)] For every finite-dimensional object in $G$-$\mathsf{SMod}$, its injective hull is finite-dimensional.
\end{itemize}
\end{prop}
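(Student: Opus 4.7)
The plan is to deduce this proposition from the corresponding statement for ordinary coalgebras, applied to the bosonization $\hat{A}$ of $A=\cO(G)$. Concretely, the category $G$-$\mathsf{SMod}$ of left $G$-super-modules is, by definition, the category of right $A$-super-comodules, and as recalled just before the statement, the assignment which forgets the $\mathbb{Z}_2$-grading information into the $\hat{A}$-coaction gives an equivalence
\[
G\text{-}\mathsf{SMod}\overset{\simeq}{\longrightarrow}\mathsf{Comod}\text{-}\hat{A}
\]
between $G$-$\mathsf{SMod}$ and the category of right $\hat{A}$-comodules. First I would verify (or remark as standard) that this equivalence is exact, preserves and reflects finite dimensionality, and preserves and reflects injective hulls as well as projectivity; this is immediate because the two structures determine each other by the formulas already given, and kernels, cokernels, direct sums and sub-objects match on the underlying vector spaces.

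Next I would translate the four conditions through this equivalence. Condition (a) is, by Definition \ref{def:G_with_integral}, equivalent to $A$ being co-Frobenius; in turn, by Proposition \ref{prop:integral_boson}, this is equivalent to the existence of a non-zero right $\hat{A}$-comodule map $\hat{A}\to k$, i.e., to $\hat{A}$ being right co-Frobenius as an ordinary coalgebra. Simultaneously, (b), (c), (d) become, respectively: $\mathsf{Comod}\text{-}\hat{A}$ has enough projectives; every injective right $\hat{A}$-comodule is projective; every finite-dimensional right $\hat{A}$-comodule has a finite-dimensional injective hull.

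Now I would invoke the co-Frobenius characterization of Dăscălescu--Năstăsescu--Raianu, \cite[Theorem 3.2.3]{DNR}, which asserts exactly the equivalence of these four conditions (right co-Frobenius, enough projectives, injectives-are-projective, finite-dimensional injective hulls of finite-dimensional comodules) in the category of right comodules over an arbitrary coalgebra. Applying it to the coalgebra $\hat{A}$ gives the equivalence (a)$\Leftrightarrow$(b)$\Leftrightarrow$(c)$\Leftrightarrow$(d) for $G$-$\mathsf{SMod}$, as desired.

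The only delicate point — and the step that most deserves attention — is the claim that the equivalence $G$-$\mathsf{SMod}\simeq\mathsf{Comod}\text{-}\hat{A}$ is compatible with projectivity and injective hulls; once this is in hand, the proof is essentially a direct appeal to \cite{DNR}. Since the coalgebra $\hat{A}=A\lcosmash \mathbb{Z}_2$ is built on $A\otimes k\mathbb{Z}_2$ and the equivalence is realized by forgetting/adding the $\mathbb{Z}_2$-grading in a functorial way, this compatibility is routine, and no further Hopf-algebraic input is required beyond what has already been established in Sections \ref{subsec:SUSYbasics} and \ref{subsec:boson_tech}.
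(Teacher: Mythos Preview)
Your proposal is correct and matches the paper's own argument: the paper likewise invokes the identification of right $A$-super-comodules with right $\hat{A}$-comodules (stated just before the proposition), uses Proposition~\ref{prop:integral_boson} to translate condition~(a), and then appeals directly to \cite[Theorem~3.2.3]{DNR} applied to the ordinary coalgebra $\hat{A}$. You have simply made explicit the routine compatibilities (exactness, preservation of finite dimensionality, injective hulls, projectivity) that the paper leaves implicit in the word ``identified.''
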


\begin{rem}\label{rem:Weissauer}
A left or right integral $\phi$ for an affine super-group $G$ 
is said to be \emph{total} if $\phi(1) = 1$. Such an integral, if it exists,
is unique (in the strict sense), and is a left and right integral, as is easily seen; see \cite[Proposition 2]{SZ0}. 
We say that $G$ is \emph{linearly reductive}, if it has a total integral.  
This is the case if and only if every object in $G$-$\mathsf{SMod}$ is semisimple
if and only if the coalgebra $A$ (or equivalently, $\hat{A}$) is \emph{cosemisimple} \cite[p.199]{DNR}.  
As was shown 
by Weissauer \cite{Weissauer}, those linearly reductive affine super-groups which are not purely even are rather restricted
even in characteristic zero. 
\end{rem}

The ordinary (commutative) algebras,
regarded as purely even super-algebras, form a full subcategory, $\mathsf{Alg}_{k}$, of 
$\mathsf{SAlg}_{k}$. The restricted group-valued functor $G|_{\mathsf{Alg}_{k}}$, 
which we denote by $G_{\mathsf{ev}}$,
is an affine group, which is represented by the largest purely even quotient Hopf super-algebra 
\begin{equation}\label{eq:H}
H :=A/(A_1)
\end{equation}
of $A$. Here $(A_1)$ denotes the super-ideal generated by the odd component $A_1$ of $A$. 
Let $W=T^*_e(G)_1$ denote the odd component of the cotangent super-vector space
$T^*_e(G)$ of $G$ at the identity element $e$; we have $\dim W<\infty$ and $G_{\mathsf{ev}}$
is algebraic, provided $G$ is algebraic.  
For the exterior algebra on $W$, recall 
from Example \ref{ex:exterior} only the co-unit $\varepsilon:\wedge(W)\to k;\ \varepsilon(w)=0,\ w\in W$. 
By \cite[Theorem 4.5]{M1} (see also \cite[Theorem 5.7]{MS2})
there exists a co-unit-preserving isomorphism
\begin{equation}\label{eq:isom}
A\simeq \wedge(W)\otimes H
\end{equation} 
of \emph{right $H$-super-comodule algebras}, i.e., algebra objects in the tensor category of right $H$-super-comodules. 

\subsection{Affine algebraic super-groups with integral }\label{subsec:integral}
In this subsection we aim to prove the following theorem, which was stated as
Proposition 7.5 in \cite{M3}
without proof.

\begin{theorem}\label{thm:integral}
Suppose that $G$ is an affine algebraic super-group; $G_{ev}$ is then an affine algebraic group. 
The following are equivalent:
\begin{itemize}
\item[(i)] $G$ has an integral; 
\item[(ii)] $G_{\mathsf{ev}}$ has an integral. 
\end{itemize}
\end{theorem}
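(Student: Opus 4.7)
The plan is to reduce, via the bosonization of Proposition \ref{prop:integral_boson}, to a statement about ordinary Hopf algebras, and then to use the structural isomorphism \eqref{eq:isom} to realize $\hat{A}$ as a cleft Hopf-algebra extension with finite-dimensional ``kernel'' over $\hat{H}$, so that the co-Frobenius property can be transferred. Algebraicity of $G_{\mathsf{ev}}$ is immediate, since the surjection $\pi: A \twoheadrightarrow H = A/(A_1)$ presents $H$ as a quotient of the finitely generated algebra $A$.

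By Proposition \ref{prop:integral_boson}, $G$ has an integral iff the ordinary Hopf algebra $\hat{A} = A \lboson \mathbb{Z}_2$ is co-Frobenius, and similarly $G_{\mathsf{ev}}$ has an integral iff $\hat{H} = H \lboson \mathbb{Z}_2$ is co-Frobenius (applied to $H$ as a purely even Hopf super-algebra). The map $\pi$ bosonizes to an ordinary Hopf-algebra surjection $\hat{\pi}: \hat{A} \twoheadrightarrow \hat{H}$, so the theorem reduces to the classical assertion that $\hat{A}$ is co-Frobenius iff $\hat{H}$ is. Now the isomorphism $A \simeq \wedge(W) \otimes H$ of right $H$-super-comodule algebras identifies $A^{\mathsf{co}\, H}$ with $\wedge(W)$ and furnishes the right $H$-colinear algebra section $\gamma: H \to A$, $h \mapsto 1 \otimes h$, of $\pi$. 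Bosonizing, $\hat{A}^{\mathsf{co}\, \hat{H}}$ becomes the finite-dimensional algebra $\hat{L} := \wedge(W) \rtimes \mathbb{Z}_2$, and $\gamma$ lifts to an $\hat{H}$-colinear algebra section $\hat{\gamma}: \hat{H} \to \hat{A}$ of $\hat{\pi}$, realizing $\hat{L} \hookrightarrow \hat{A} \twoheadrightarrow \hat{H}$ as a cleft Hopf-algebra extension with trivial cocycle and $\hat{A} \cong \hat{L} \otimes \hat{H}$ as right $\hat{H}$-comodules.

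The equivalence then follows from the general transfer principle for cleft Hopf extensions with finite-dimensional (hence Frobenius) kernel: $\hat{A}$ is co-Frobenius iff $\hat{H}$ is. This last step is the main obstacle, since the precise transfer result in the exact setting needed may have to be located in the literature or proved by hand. As a fallback, I would construct integrals explicitly using the decomposition $\hat{A} \cong \hat{L} \otimes \hat{H}$: to go from $H$ to $A$, combine the integral $\phi_\wedge$ on $\wedge(W)$ from Example \ref{ex:exterior} with the $\omega$ of Proposition \ref{prop:integral_boson} and any right integral $\phi_H$ on $H$, and verify right $\hat{A}$-colinearity using that $\hat{\gamma}$ is an $\hat{H}$-colinear section; conversely, a right integral on $\hat{A}$ is pulled back along $\hat{\gamma}$ to yield one on $\hat{H}$.
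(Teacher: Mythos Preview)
Your strategy is different from the paper's, and it has a real gap at the step you yourself flag as the ``main obstacle.'' The decomposition $A\simeq\wedge(W)\otimes H$ of \eqref{eq:isom} is only an isomorphism of right $H$-super-comodule \emph{algebras}; it does \emph{not} respect the $A$-coalgebra structure. Consequently, $\wedge(W)$ (or, after bosonization, your $\hat L$) is merely a left coideal subalgebra, not a Hopf subalgebra, so $\hat L\hookrightarrow\hat A\twoheadrightarrow\hat H$ is not an exact sequence of Hopf algebras, and the standard ``co-Frobenius transfers along Hopf exact sequences'' results do not apply directly. (Incidentally, by the computation in Section~\ref{subsec:boson_tech} one has $\hat A^{\operatorname{co}\hat H}=\wedge(W)\otimes k$, not $\wedge(W)\rtimes\mathbb Z_2$.) Your fallback of setting $\phi=\phi_\wedge\otimes\phi_H$ under the decomposition does not work as stated: to check right $A$-colinearity of $\phi$ you need control of $\Delta_A$ relative to the splitting, and that is precisely the hard content. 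The paper's explicit formula \eqref{eq:explicit_integral}, which involves the non-trivial grouplike $\delta$, shows that the correct integral is a genuine twist of the naive tensor product. For the converse direction, pulling back along $\hat\gamma$ gives an $\hat H$-colinear map but you have not argued it is non-zero; the paper instead uses that $A$ is finitely co-free over $H$, so \emph{some} component of a nonzero $A$-integral must be a nonzero $H$-integral.

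The paper's route for (ii)$\Rightarrow$(i) is quite different from yours: it first reduces (Lemma~\ref{lem1}) to $k=\bar k$ and $G$ connected, then splits by characteristic. In characteristic $p>0$ it uses the Frobenius exact sequence $G_r\to G\to G^{(r)}$---an honest short exact sequence of affine super-groups with $G_r$ finite and $G^{(r)}$ eventually a torus---so that Lemma~\ref{lem2} applies. In characteristic $0$ it exploits the $\alpha$-Frobenius extension $U(\mathfrak g)\supset U(\mathfrak g_0)$ to produce the explicit isomorphism of Proposition~\ref{prop:integral_isom}; this explicit formula is not a luxury, since it is what drives the unimodularity criterion of Proposition~\ref{prop:unimodular}.

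That said, your underlying idea can be salvaged into a characteristic-free argument, bypassing bosonization entirely. Use Proposition~\ref{prop:characterize}(d): since restriction along $\pi$ is exact, co-induction $-\,\square_H A:\text{Comod-}H\to\text{Comod-}A$ is its right adjoint and hence preserves injectives; and because $A$ is finitely co-free as a left $H$-comodule (by \eqref{eq:isom} and the antipode), co-induction also preserves finite-dimensionality. Thus if finite-dimensional $H$-comodules have finite-dimensional injective hulls, the same holds for $A$-comodules, giving (ii)$\Rightarrow$(i). This is shorter than the paper's argument but does not yield the explicit integral needed for Proposition~\ref{prop:unimodular}.
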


\begin{proof}[Proof of (i) $\Rightarrow$ (ii)]
The implication follows since the right $\mathscr{O}(G_{\mathsf{ev}})$-comodule $\mathscr{O}(G)$ 
is \emph{co-free} (i.e., the direct sum of some copies of $\mathscr{O}(G_{\mathsf{ev}})$), as is seen from \eqref{eq:isom}. 
\end{proof}

A proof of (ii) $\Rightarrow$ (i) will be given in Sections 
\ref{subsubsec:proof_in_char_p}--\ref{subsubsec:proof_in_char_zero} below. 

The following Sullivan's Theorem for $F$ applied to $G_{\mathsf{ev}}$ tells us 
precisely when Condition (ii) above is satisfied.

\begin{theorem}[\text{Sullivan}]\label{thm:Sullivan} 
Let $F$ be an affine algebraic group. 
\begin{itemize}
\item[(1)] Suppose $\op{char}k=0$. Then $F$ has an integral if and only if $F$ is linearly reductive.
\item[(2)] Suppose $\op{char}k>0$, and let $F^{\circ}$ be the 
connected component of $F$ containing the identity element. 
Then $F$ has an integral if and only if the reduced affine algebraic
$\bar{k}$-group $(F^{\circ}_{\bar{k}})_{\mathsf{red}}$
associated with the base extension $F^{\circ}_{\bar{k}}$ of $F^{\circ}$ to
$\bar{k}$ is a torus. 
\end{itemize}
\end{theorem}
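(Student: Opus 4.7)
The plan is to handle the two parts separately, and in each part to separate the easy existence direction (construct an integral from a structural hypothesis) from the harder direction (deduce structure from mere existence of an integral). In both easy directions the integral is a projection onto a trivial sub-comodule: for a linearly reductive $F$ the regular comodule $\mathscr{O}(F)$ contains $k$ as a direct summand, and the projection is a total integral; for a torus $T=\mathbb{G}_m^n$ with $\mathscr{O}(T)=k[x_1^{\pm 1},\dots,x_n^{\pm 1}]$, the functional picking off the coefficient of $1$ is a total integral. For Part~(2) these constructions have to be promoted along finite-\'etale coverings (by averaging over $F/F^\circ$) and along infinitesimal extensions (whose coordinate rings are finite-dimensional and automatically Frobenius, hence co-Frobenius); both are routine and preserve the existence of an integral in either direction.

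For the hard direction in Part~(1), my plan is to show that in characteristic zero every non-zero integral $\phi$ satisfies $\phi(1)\neq 0$, so that after normalization $\phi$ becomes a total integral and $F$ is linearly reductive. The key input is Cartier's theorem that affine algebraic groups in characteristic zero are smooth, so $\mathscr{O}(F)$ is reduced. Combined with the uniqueness of the integral up to scalar (Corollary after Proposition~\ref{prop:integral_boson}, applied with $W=0$, i.e.\ to ordinary Hopf algebras) and the observation that $\ker\phi$ is a sub-coalgebra of codimension $\le 1$, one shows that if $\phi(1)=0$ then $\phi$ is forced to vanish on every finite-dimensional sub-coalgebra containing the relevant group-like elements, contradicting $\phi\neq 0$.

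For the hard direction in Part~(2), after faithfully flat descent to $k=\bar k$ and averaging to pass to the identity component $F^\circ$, one further reduces to $(F^\circ)_{\mathsf{red}}$ by using that the infinitesimal kernel of $F^\circ \twoheadrightarrow (F^\circ)_{\mathsf{red}}$ is finite. It then remains to prove that a connected reduced affine algebraic group over an algebraically closed field of characteristic $p>0$ with an integral must be a torus. My plan is to argue by contradiction via quotients: an integral on $F$ descends to one on any Hopf quotient, since by Takeuchi's theorem $\mathscr{O}(F)$ is faithfully flat over any Hopf subalgebra of invariants, which preserves the co-Frobenius property. A non-toroidal $F$ admits either a non-trivial unipotent quotient, in which case we would obtain an integral on $\mathscr{O}(\mathbb{G}_a)=k[x]$ (easily ruled out, since $k[x]^*$ has no nonzero finite-dimensional left ideal), or a non-trivial semisimple quotient, in which case we would obtain an integral for $\mathrm{SL}_2$, impossible in characteristic $p$ because the trivial module has infinite-dimensional injective hull in $\mathrm{SL}_2$-$\mathsf{Mod}$. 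The main obstacle is this last descent-of-integrals along Hopf quotients together with the verification that $\mathscr{O}(\mathrm{SL}_2)$ is not co-Frobenius in characteristic $p$; the former is the standard commutative-Hopf-algebraic faithful flatness, and the latter relies on the representation theory of $\mathrm{SL}_2$ in positive characteristic (Steinberg modules, tilting theory).
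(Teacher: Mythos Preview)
The paper does not prove this theorem: it is stated as Sullivan's theorem and attributed to \cite{Su1}, \cite{Su2}, with no argument given. So there is nothing in the paper to compare your proposal against; the authors simply quote the result as background.

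That said, your sketch has two genuine gaps worth flagging. In Part~(1), the claim that $\ker\phi$ is a sub-coalgebra is incorrect: for a right integral $\phi$ (a left $A$-comodule map $A\to k$), the kernel is a left coideal, not a sub-coalgebra, so the argument ``$\phi$ vanishes on every finite-dimensional sub-coalgebra containing the relevant grouplikes'' does not go through as stated. Sullivan's actual argument in characteristic zero is more structural, passing through the reductivity of the identity component rather than a direct positivity statement $\phi(1)\ne 0$.

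In Part~(2), your reduction to $\mathrm{SL}_2$ does not work. After reducing to a connected reduced group $F$ over $\bar k$, you correctly use the short-exact-sequence lemma (the non-super case of Lemma~\ref{lem2}(2)) to kill the unipotent radical via $\mathbb{G}_a$, landing in the reductive case. But a nontrivial semisimple quotient need not have $\mathrm{SL}_2$ as a further quotient or as a normal subgroup: simple groups have no nontrivial connected normal subgroups, and the root $\mathrm{SL}_2$'s inside them are not normal, so Lemma~\ref{lem2}(2) does not apply. You must instead show directly that every simple algebraic group in characteristic $p$ fails to be co-Frobenius (equivalently, that the injective hull of the trivial module is infinite-dimensional), which is true but requires the general representation theory of reductive groups in positive characteristic, not just the $\mathrm{SL}_2$ case.
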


\begin{rem}\label{rem:large_enough}
Suppose $\op{char}\, k=0$. One sees from the preceding two theorems that the
connected (see below) affine algebraic super-groups with integral are precisely 
what Serganova \cite{Se} proposed to call \emph{quasi-reductive super-groups};
they form a large class which includes 
\emph{Chevalley super-groups of classical type} \cite{FG}.
See also Grishkov and Zubkov \cite{GZ}, and Shibata \cite{Sh}. 
\end{rem}

Let $G$ be an affine algebraic super-group, and let $A= \mathscr{O}(G)$. Then $A$
includes the largest (purely even) separable subalgebra $\pi_0A$, which is necessarily a 
Hopf subalgebra. 
Let $\pi_0G$ denote the finite etale affine group represented by $\pi_0A$. Let $G^{\circ}$ denote
the affine algebraic closed super-subgroup of $G$ which is represented by the quotient Hopf super-algebra
$A/((\pi_0A)^+)$ of $A$; if $G=G_{\mathsf{ev}}$, this $G^{\circ}$ coincides with what appeared in Part 2 of
Theorem \ref{thm:Sullivan}, as $F^{\circ}$ for $F$.  
We have the short exact sequence $G^{\circ} \to G \to \pi_0G$ of affine algebraic super-groups. 
One sees from \cite[Section 2.2]{MZ2}
\begin{equation}\label{eq:piO}
\pi_0(G_{\mathsf{ev}})=\pi_0 G,\quad (G_{\mathsf{ev}})^{\circ}=(G^{\circ})_{\mathsf{ev}}, 
\end{equation}
and that the relevant constructions commute with base extension. We say that $G$ is \emph{connected}
if $G=G^{\circ}$, or equivalently, if $\pi_0G$ is trivial; by \eqref{eq:piO},
this is equivalent to saying that $G_{\mathsf{ev}}$ is connected.

\begin{lemma}\label{lem1}
The following are equivalent:
\begin{itemize}
\item[(a)] $G$ has an integral; 
\item[(b)] $G_{\bar{k}}$ has an integral;
\item[(c)] $G^{\circ}$ has an integral;
\item[(d)] $G^{\circ}_{\bar{k}}$ has an integral.
\end{itemize}
\end{lemma}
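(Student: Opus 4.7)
My plan is to establish the four-way equivalence by proving (a)$\Leftrightarrow$(c) and (a)$\Leftrightarrow$(b); the remaining (b)$\Leftrightarrow$(d) then follows by applying (a)$\Leftrightarrow$(c) to $G_{\bar{k}}$, since $(G^{\circ})_{\bar{k}}=(G_{\bar{k}})^{\circ}$ by \eqref{eq:piO}.

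For (a)$\Leftrightarrow$(c), I will work directly with the integral spaces along the Hopf super-algebra sequence $\pi_{0}A\hookrightarrow A\twoheadrightarrow A^{\circ}$. Since $\pi_{0}A$ is finite-dimensional and separable, the extension is cleft, and $A$ is co-free as a right $A^{\circ}$-comodule via $(\mathrm{id}\otimes\pi)\Delta$; explicitly there is an isomorphism $A\simeq \pi_{0}A\otimes A^{\circ}$ of right $A^{\circ}$-comodules with trivial coaction on the $\pi_{0}A$-factor. For (a)$\Rightarrow$(c), any right $A$-comodule map $\phi:A\to k$ is automatically a right $A^{\circ}$-comodule map under this coaction (apply $\mathrm{id}_{k}\otimes\pi$ to the defining identity, and use $\pi(1)=1$), so $\phi\in \operatorname{Hom}^{-A^{\circ}}(A,k)\simeq (\pi_{0}A)^{*}\otimes \operatorname{Hom}^{-A^{\circ}}(A^{\circ},k)$, forcing $\operatorname{Hom}^{-A^{\circ}}(A^{\circ},k)\neq 0$ when $\phi\neq 0$. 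For (c)$\Rightarrow$(a), given non-zero integrals $\phi^{\circ}:A^{\circ}\to k$ for $G^{\circ}$ and $\psi:\pi_{0}A\to k$ for the finite-dimensional cosemisimple Hopf algebra $\pi_{0}A$, the tensor $\psi\otimes\phi^{\circ}$ under $A\simeq \pi_{0}A\otimes A^{\circ}$ will be shown to give a non-zero right $A$-comodule map $A\to k$ by a direct calculation with the coalgebra structure of the cleft extension.

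For (a)$\Leftrightarrow$(b), the forward direction is immediate, as $\phi\otimes_{k}\bar{k}$ is a non-zero integral for $G_{\bar{k}}$ whenever $\phi$ is one for $G$. For the converse, the uniqueness-up-to-scalar of integrals (the corollary after Proposition \ref{prop:integral_boson}) implies that both $\operatorname{Hom}^{-A}(A,k)$ and $\operatorname{Hom}^{-A_{\bar{k}}}(A_{\bar{k}},\bar{k})$ have dimension at most one, so it suffices to verify that the natural injection $\operatorname{Hom}^{-A}(A,k)\otimes_{k}\bar{k}\hookrightarrow\operatorname{Hom}^{-A_{\bar{k}}}(A_{\bar{k}},\bar{k})$ is surjective. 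Via Proposition \ref{prop:integral_boson}, this reduces to the corresponding base-change statement for the ordinary Hopf algebra $\hat{A}$, which is a known invariance of the co-Frobenius property under field extensions.

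The main obstacle will be rigorously verifying that $\psi\otimes\phi^{\circ}$ is an integral in (c)$\Rightarrow$(a), since the coalgebra structure on $A$ differs from the naive tensor coalgebra $\pi_{0}A\otimes A^{\circ}$ by a cocycle twist whose compatibility with the product of integrals must be checked; as a fallback, one can invoke the cotensor adjunction $\operatorname{Hom}^{-A}(A,(-)\square_{A^{\circ}}A)\simeq \operatorname{Hom}^{-A^{\circ}}(A,-)$, which identifies $\operatorname{Hom}^{-A}(A,\pi_{0}A)\simeq \operatorname{Hom}^{-A^{\circ}}(A,k)$, and then compose with $\psi$ after choosing a suitable non-zero preimage. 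The descent step in (b)$\Rightarrow$(a) is also subtle in positive characteristic, where the purely inseparable part of $\bar{k}/k$ must be handled separately from the separable part.
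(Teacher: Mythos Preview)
Your overall logical scheme is sound, and your treatment of (a)$\Leftrightarrow$(b) is essentially what the paper does: both reduce, via Proposition~\ref{prop:integral_boson}, to the base-change invariance of integrals for ordinary Hopf algebras, which is Sullivan's \cite[Proposition~2.1]{Su2}. The deduction of (b)$\Leftrightarrow$(d) from (a)$\Leftrightarrow$(c) applied over $\bar{k}$ is also fine.

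The substantive difference is in (a)$\Leftrightarrow$(c). The paper does not argue directly with integral spaces; instead it invokes Lemma~\ref{lem2}: part (1) says the finite super-group $\pi_0G$ has an integral, and part (2) (whose proof is deferred to \cite[Theorem~2.20]{Su2}) says that in the exact sequence $G^{\circ}\to G\to\pi_0G$, the middle term has an integral iff both ends do. This is cleaner than your route and avoids the structural claim you rely on.

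Your direct argument has a genuine gap. The assertion ``since $\pi_0A$ is finite-dimensional and separable, the extension is cleft, and $A$ is co-free as a right $A^{\circ}$-comodule'' is not justified and is not a standard consequence of separability of $\pi_0A$. Co-freeness amounts to the $G^{\circ}$-torsor $G\to\pi_0G$ being trivial, i.e.\ every connected component of $G$ having a rational point over the relevant residue field; this holds over $\bar{k}$ (by the Nullstellensatz) but can fail over $k$ when some component is a non-trivial $G^{\circ}$-torsor. Both directions of your (a)$\Leftrightarrow$(c) use this decomposition, so the gap affects both. A secondary slip: $\pi_0A$ is separable as an \emph{algebra}, but it need not be cosemisimple as a \emph{coalgebra} in positive characteristic (e.g.\ $k^{\Gamma}$ with $p\mid|\Gamma|$); this does not kill the existence of $\psi$ (finite-dimensional Hopf algebras always have non-zero integrals), but it undermines the intuition behind your construction. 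Your fallback via the cotensor adjunction $\operatorname{Hom}^{-A}(A,\pi_0A)\simeq\operatorname{Hom}^{-A^{\circ}}(A,k)$ is a good idea and can be made to work, but you still owe the verification that the composite $\psi\circ\Phi$ is non-zero; this requires an argument (for instance, showing that $\Phi|_{A^{\circ}}(a)=\phi^{\circ}(a)\cdot 1$ lands in the component of $\pi_0A$ on which $\psi$ does not vanish), which you have not supplied. A simpler fix is to reorder: prove (a)$\Leftrightarrow$(b) first, then run your co-freeness argument only over $\bar{k}$ (where it is valid) to get (b)$\Leftrightarrow$(d), and finally deduce (a)$\Leftrightarrow$(c) from (a)$\Leftrightarrow$(b) applied to $G^{\circ}$.
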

\begin{proof}
For (a) $\Leftrightarrow$ (b), modify the proof of \cite[Proposition 2.1]{Su2} 
into the super situation. 
We see that (a) $\Leftrightarrow$ (c) follows from Lemma \ref{lem2} below. 
The just proved equivalence applied to $G_{\bar{k}}$ gives (b) $\Leftrightarrow$ (d). 
\end{proof}

\begin{lemma}\label{lem2}
We have the following.
\begin{itemize}
\item[(1)] Every finite affine super-group has an integral. 
\item[(2)] Suppose that $N \to G \to Q$ is a short exact sequence of affine
super-groups. Then $G$ has an integral if and only if $N$ and $Q$ both have integrals. 
\end{itemize}
\end{lemma}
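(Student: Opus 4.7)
For part (1), since $k$ is a field, ``finite'' means $A := \cO(G)$ is a finite-dimensional Hopf super-algebra. Its bosonization $\widehat{A} = A \lboson \mathbb{Z}_2$ is then a finite-dimensional ordinary Hopf algebra, so the classical Larson--Sweedler theorem produces a non-zero integral $\widehat{\phi} \in \operatorname{Hom}^{-\widehat{A}}(\widehat{A},k)$. Applying Proposition \ref{prop:integral_boson} transfers $\widehat{\phi}$ to a non-zero integral for $G$.

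For part (2), the plan is to reduce to the analogous statement for ordinary Hopf algebras by bosonizing. Dualizing the short exact sequence $N \to G \to Q$ of affine super-groups gives an exact sequence of Hopf super-algebras
\[
\cO(Q) \hookrightarrow \cO(G) \twoheadrightarrow \cO(N),
\]
in which $\cO(Q)$ is a normal Hopf sub-super-algebra, $\cO(N) \simeq \cO(G)/(\cO(Q)^+\cO(G))$, and $\cO(G)$ is faithfully flat over $\cO(Q)$ (the super-version of Takeuchi's theorem, available from the authors' earlier work cited in the excerpt). The functor $(-)\lboson \mathbb{Z}_2$ is obtained by tensoring with $k\mathbb{Z}_2$ on the underlying vector-space level and therefore preserves injectivity, surjectivity, the normality condition, and faithful flatness. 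Consequently one obtains a short exact sequence of ordinary Hopf algebras
\[
\widehat{\cO(Q)} \hookrightarrow \widehat{\cO(G)} \twoheadrightarrow \widehat{\cO(N)}
\]
with $\widehat{\cO(G)}$ faithfully flat over $\widehat{\cO(Q)}$. By Proposition \ref{prop:integral_boson}, each super-group in the original sequence has an integral if and only if the corresponding bosonized Hopf algebra is co-Frobenius.

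The task therefore reduces to the ordinary Hopf-algebraic statement: in a short exact sequence $K \hookrightarrow H \twoheadrightarrow L$ of Hopf algebras with $H$ faithfully flat over $K$, $H$ is co-Frobenius if and only if both $K$ and $L$ are. I expect this to be the principal obstacle. For the ``only if'' direction, an integral $\lambda$ for $H$ pushes forward to an integral for $L$ because $L$ is a quotient Hopf algebra, and its restriction to $K$, composed with a standard averaging using faithful flatness, yields one for $K$; for the converse, one combines an integral $\lambda_L$ for $L$, lifted to $H$ via the projection $H \twoheadrightarrow L$, with an integral $\lambda_K$ for $K$, exploiting the normal-basis/cleft-type isomorphism $H \simeq K \otimes L$ available for faithfully flat extensions of Hopf algebras with bijective antipode. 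The required ingredients are standard in Hopf-Galois theory (cf.\ the tools used already in \cite{SS} and Schneider's structure theorem for faithfully flat Hopf Galois extensions), and references such as Dăscălescu--Năstăsescu--Raianu should supply either the result directly or the machinery to assemble it cleanly.
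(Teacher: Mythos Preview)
Your argument for part~(1) is correct and matches the paper's proof exactly: bosonize, apply the classical fact that finite-dimensional Hopf algebras are co-Frobenius, and transfer back via Proposition~\ref{prop:integral_boson}.

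For part~(2), your route diverges from the paper's. The paper does \emph{not} bosonize here; it observes that Sullivan's original proof of \cite[Theorem~2.20]{Su2}, written for commutative Hopf algebras, goes through verbatim for super-commutative Hopf super-algebras. This is the shorter path precisely because super-commutativity is close enough to commutativity that Sullivan's arguments (which use the structure of $\cO(G)$ as a comodule algebra over $\cO(Q)$ and $\cO(N)$) need no real modification.

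Your detour through bosonization lands you on a genuinely harder target: the analogous statement for an exact sequence $K\hookrightarrow H\twoheadrightarrow L$ of \emph{non-commutative} Hopf algebras. Your sketch of that statement has soft spots. The claim that an integral for $H$ ``pushes forward'' to one for the quotient $L$ is not automatic: an element $\lambda\in H^*$ with $\xi\lambda=\xi(1)\lambda$ does not in general descend to $L^*$, and one needs instead an argument via injective hulls or cofreeness of $H$ over $L$. Likewise, ``restriction composed with averaging'' to get an integral on $K$ is vague---restricting $\lambda$ to $K$ typically gives zero. For the converse direction, invoking a cleft/normal-basis isomorphism $H\simeq K\otimes L$ is reasonable (Schneider's theorem applies since the bosonized antipode is bijective), but turning integrals $\lambda_K$, $\lambda_L$ into a nonzero integral for $H$ still requires a concrete construction and a nonvanishing check, not just a citation of \cite{DNR} or \cite{SS}. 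None of this is insurmountable, but it is substantially more work than you indicate, and the needed non-commutative statement is not sitting ready-made in the references you name. The paper's approach sidesteps all of this by staying in the (super-)commutative world where Sullivan already did the work.
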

\begin{proof}
(1)\ This follows from Proposition \ref{prop:integral_boson}, by applying to 
$\mathscr{O}(G)\lboson \mathbb{Z}_2$ 
the well-known fact that every finite-dimensional Hopf algebra is co-Frobenius; see \cite[Sections 5.2-5.3]{DNR}. 

(2)\ The proof of \cite[Theorem 2.20]{Su2} in the non-super situation works well. 
\end{proof}

We are going to prove the remaining implication (ii) $\Rightarrow$ (i) of Theorem \ref{thm:integral}.
By Lemma \ref{lem1} we may and do assume that $k$ is algebraically closed, and $G$ is connected. 
Let us write $A=\mathscr{O}(G),\ H=\mathscr{O}(G_{\mathsf{ev}})$, as in \eqref{eq:A}, \eqref{eq:H}. 

\begin{rem}\label{rem:Scheunert_Zhang}
In characteristic zero, the results we are going to prove (assuming as above) are in part, quite similar
to those obtained by Scheunert and Zhang \cite{SZ}. 
An advantage of ours is an explicit formula (see Remark \ref{rem:explicit_integral}) of the integral for $G$;
it will be applied to prove the result, Proposition \ref{prop:unimodular}, 
which shows precisely when $G$ is unimodular (Definition \ref{def:G_with_integral}),
and thereby proves that $G$ is unimodular if $G_{\mathsf{ev}}$
is semisimple. 
\end{rem}

\subsubsection{Proof in positive characteristic}\label{subsubsec:proof_in_char_p}
Suppose $\operatorname{char} k=p>0$. 
In view of Theorem \ref{thm:Sullivan} (2),
we should prove that $G$ has an integral, assuming that $(G_{\mathsf{ev}})_{\mathsf{red}}$ is a torus.  
Given a positive integer $r$, the $r$-iterated Frobenius morphism gives the short exact sequence
with finite kernel
\[ G_r \to G \to G^{(r)}. \] 
Thus $\mathscr{O}(G^{(r)})$ is spanned by the elements $a^{p^r}$, where $a \in A$. We see from the 
isomorphism \eqref{eq:isom} that for $r$ large enough, $\mathscr{O}(G^{(r)})$ is purely even and reduced, and
is naturally embedded into $H/\sqrt{0}=\mathscr{O}((G_{\mathsf{ev}})_{\mathsf{red}})$. 
Hence $G^{(r)}$ is a torus, which has an integral. This together with Lemma \ref{lem2} prove
the desired result. 

\begin{rem}
Zubkov and Marko \cite{ZM} investigated closely the Frobenius kernels $G_r$,
when $G$ is a general linear super-group $GL(m|n)$. 
\end{rem}

\subsubsection{Proof in characteristic zero}\label{subsubsec:proof_in_char_zero}
Suppose $\operatorname{char}k = 0$. 
Assume that the connected affine algebraic group $G_{\mathsf{ev}}$ has an integral, or equivalently, $G_{\mathsf{ev}}$
is linearly reductive; this last is equivalent to saying that $G_{\mathsf{ev}}$ is reductive since it is now connected. 
For the vector space of comodule maps we will use the notation \eqref{eq:HomC}. 
Then we have $\operatorname{Hom}^{-H}(H,k)\ne 0$. 
It suffices to prove the following:

\begin{prop}\label{prop:integral_isom}
We have a $k$-linear isomorphism
\begin{equation}\label{eq:integral_isom}
\operatorname{Hom}^{-A}(A,k)\simeq \operatorname{Hom}^{-H}(H,k). 
\end{equation}
\end{prop}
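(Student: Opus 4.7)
My plan. Both sides of \eqref{eq:integral_isom} are at most one-dimensional by the uniqueness corollary to Proposition~\ref{prop:integral_boson}, and the right-hand side is non-zero by hypothesis; thus it suffices to show $\operatorname{Hom}^{-A}(A,k)\neq 0$, which together with the hypothesis will furnish the desired isomorphism. A natural candidate for the missing $A$-integral can be written down explicitly, which I expect to produce the formula referred to in Remark~\ref{rem:explicit_integral}.

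Construction. Using the counit-preserving isomorphism $\varphi : A\xrightarrow{\sim}\wedge(W)\otimes H$ of right $H$-super-comodule algebras from~\eqref{eq:isom} and the exterior integral $\phi_{\wedge(W)}$ of Example~\ref{ex:exterior}, set
\[
\phi_A := (\phi_{\wedge(W)}\otimes \phi_H)\circ\varphi \ :\ A\to k,
\]
for $\phi_H$ a fixed non-zero right $H$-integral. This is manifestly non-zero. Using only that $\varphi$ is a right $H$-comodule isomorphism (transporting $(\operatorname{id}_A\otimes\pi)\Delta_A$ to $\operatorname{id}_{\wedge(W)}\otimes\Delta_H$) and the integrality of $\phi_H$, a short direct computation yields the identity modulo $(A_1)$:
\[
\sum \phi_A(a_{(1)})\,\pi(a_{(2)}) \ =\ \phi_A(a)\cdot 1_H\qquad (a\in A),
\]
where $\pi:A\twoheadrightarrow H$ is the canonical projection; equivalently, the error $\sum\phi_A(a_{(1)})a_{(2)}-\phi_A(a)\,1_A$ lies in $(A_1)$.

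The core of the proof is upgrading this mod-$(A_1)$ identity to a true equality in $A$. My plan is to pass to the bosonization via Proposition~\ref{prop:integral_boson}: producing a non-zero $A$-integral is equivalent to producing one for the ordinary Hopf algebra $\hat A$. The surjection $\hat A\twoheadrightarrow \hat H$ induced from $\pi$, together with~\eqref{eq:isom}, exhibits $\hat A$ as co-free over $\hat H$ with co-invariants a finite-dimensional normal sub-Hopf-algebra (the bosonization of $\wedge(W)\otimes 1\subseteq A$). This assembles into a short exact sequence of ordinary Hopf algebras with finite-dimensional (hence Frobenius) kernel, to which standard Hopf-algebraic transfer results for co-Frobenius in Hopf algebra extensions apply: $\hat H$ being co-Frobenius (by the hypothesis on $H$ combined with Proposition~\ref{prop:integral_boson}) forces $\hat A$ to be co-Frobenius, which back-translates via Proposition~\ref{prop:integral_boson} into $\operatorname{Hom}^{-A}(A,k)\neq 0$. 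Combined with the uniqueness corollary, both sides of \eqref{eq:integral_isom} are then one-dimensional, and the candidate $\phi_A$ must span $\operatorname{Hom}^{-A}(A,k)$.

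The main obstacle is precisely this upgrade step. A naive tensor-product computation inside \eqref{eq:isom} is insufficient, because $\varphi$ respects only the right $H$-comodule and algebra structures of $A$; the coalgebra structure on $A$ relative to the decomposition $\wedge(W)\otimes H$ carries a twist inherent to the cleft-extension nature of $A$ over $H$. Bosonization is the natural vehicle for packaging this twist into a concrete short exact sequence of ordinary Hopf algebras with finite-dimensional kernel, where extension-theoretic results on co-Frobenius transfer (making essential use of the Frobenius structure of $\wedge(W)$) can be invoked.
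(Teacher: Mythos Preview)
Your bosonization strategy has a genuine gap. The assertion that the co-invariants of $\hat A\twoheadrightarrow\hat H$ form a \emph{normal sub-Hopf-algebra} of $\hat A$, assembling into a short exact sequence of ordinary Hopf algebras, is false in general. The underlying obstruction is that $G_{\mathsf{ev}}$ is not a normal super-subgroup of $G$: already for $G=GL(1|1)$, conjugating a diagonal matrix $\operatorname{diag}(a,d)$ with $a\ne d$ by an element with a nonzero odd entry does not stay in $G_{\mathsf{ev}}$. Dually, the co-invariant subalgebra $A^{\mathrm{co}\,H}$ (which \eqref{eq:isom} identifies with $\wedge(W)$) is only a left coideal subalgebra of $A$, not a Hopf sub-super-algebra; the isomorphism $\varphi$ of \eqref{eq:isom} respects the right $H$-comodule and algebra structures but \emph{not} the coproduct of $A$. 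Hence the transfer results you invoke (which, as in Lemma~\ref{lem2}\,(2), require a genuine short exact sequence) are unavailable. Relatedly, your candidate $\phi_A=(\phi_{\wedge(W)}\otimes\phi_H)\circ\varphi$ is not the integral in general: the explicit formula in Remark~\ref{rem:explicit_integral} carries a twist by the grouplike $\delta\in H$ determined by $\delta(x)=\operatorname{Tr}(\operatorname{ad}'\!x)$, and your $\phi_A$ coincides with it only in the unimodular case $\delta=1$ of Proposition~\ref{prop:unimodular}.

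The paper circumvents the non-normality of $G_{\mathsf{ev}}$ by working on the enveloping-algebra side. Using connectedness of $G_{\mathsf{ev}}$ and $\operatorname{char}k=0$, one has $\operatorname{Hom}^{-A}(A,k)=\operatorname{Hom}_{U(\mathfrak{g})-}(A,k)$ (Lemma~\ref{lem:identity}). The key structural input is that $U(\mathfrak{g})\supset U(\mathfrak{g}_0)$ is a free $\alpha$-Frobenius extension, with the twist $\alpha$ governed by $\delta$; combined with the $U(\mathfrak{g})$-module isomorphism $\eta:A\overset{\simeq}{\to}\operatorname{Hom}_{U(\mathfrak{g}_0)-}(U(\mathfrak{g}),H)$, this yields $A\simeq U(\mathfrak{g})\otimes_{U(\mathfrak{g}_0)}{}_{\bar\alpha}H$ and hence $\operatorname{Hom}^{-A}(A,k)\simeq\operatorname{Hom}^{-H}({}_{\bar\alpha}H,k)$. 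The final step, ${}_{\bar\alpha}H\simeq H$ as right $H$-comodules, uses that $\delta$ is a grouplike in $H$, which in turn rests on the reductivity of $G_{\mathsf{ev}}$ (so that the character $\delta|_{\mathfrak h}\in X(T)$ exponentiates). This twist is precisely the content invisible from the tensor decomposition \eqref{eq:isom} alone.
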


We are going to prove this, showing
explicitly the isomorphism; see Remark \ref{rem:explicit_integral}. 

Let $\mathfrak{g}=\operatorname{Lie}(G)$ be the Lie super-algebra of $G$; see \cite[Section 3]{Z1}, for example.
This $\mathfrak{g}$ is finite-dimensional. 
Note that the odd component $\mathfrak{g}_1$ of $\mathfrak{g}$ coincides with the dual
vector space $W^*$ of the $W$ in \eqref{eq:isom}. 
The even component $\mathfrak{g}_0$ of $\mathfrak{g}$ coincides with the Lie algebra 
$\operatorname{Lie}(G_{\mathsf{ev}})$ of $G_{\mathsf{ev}}$, which is now reductive. 

The universal envelope
$U(\mathfrak{g})$ of $\mathfrak{g}$ 
is a Hopf super-algebra with all elements in $\mathfrak{g}$ primitive, 
which is super-cocommutative, but is not necessarily super-commutative; it includes
the universal envelope $U(\mathfrak{g}_0)$ of $\mathfrak{g}_0$ as the largest purely even
Hopf super-subalgebra.  
We have the canonical pairing $\langle \ , \ \rangle : U(\mathfrak{g})\times A \to k$. 
This defines on $A$, the natural left $U(\mathfrak{g})$-module structure 
\[
u \cdot a := \sum_{(a)} a_{(1)}\, \langle u, a_{(2)}\rangle,\quad
u \in U(\mathfrak{g}),\ a\in A.
\]
Here and in what follows, $\Delta(a)=\sum_{(a)} a_{(1)}\otimes a_{(2)}$ denotes the co-product on 
any Hopf (super-)algebra; in addition, see \cite[Section 3.2]{MS2} for sign convention. 
Similarly, the canonical pairing $U(\mathfrak{g}_0) \times H \to k$ defines
a natural left $U(\mathfrak{g}_0)$-module structure on $H$. For the algebra $R=U(\mathfrak{g})$ 
or $U(\mathfrak{g}_0)$, we let $\operatorname{Hom}_{R-}$ denote the
vector space of left $R$-module maps. 

\begin{lemma}\label{lem:identity}
We have
\begin{equation}\label{eq:identity}
\operatorname{Hom}^{-A}(A,k)=\operatorname{Hom}_{U(\mathfrak{g})-}(A,k). 
\end{equation}
\end{lemma}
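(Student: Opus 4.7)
The plan is to unpack the two defining conditions in terms of a single auxiliary element of $A$, and to reduce the claimed equality to non-degeneracy of the canonical pairing $\langle\ ,\ \rangle : U(\mathfrak{g})\times A \to k$ on the $A$-side. Given any $k$-linear $\phi : A \to k$, set
$$\psi_a \;:=\; \sum_{(a)} \phi(a_{(1)})\, a_{(2)} \;-\; \phi(a)\cdot 1_A \;\in\; A, \qquad a \in A.$$
With $k$ carrying the trivial right $A$-comodule structure $\lambda \mapsto \lambda\otimes 1_A$, $\phi$ lies in $\operatorname{Hom}^{-A}(A,k)$ if and only if $\psi_a = 0$ for all $a$. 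With $k$ carrying the trivial left $U(\mathfrak{g})$-module structure via the counit, the defining formula $u\cdot a = \sum a_{(1)}\langle u, a_{(2)}\rangle$ shows at once that $\phi$ is $U(\mathfrak{g})$-linear if and only if $\langle u, \psi_a\rangle = 0$ for all $u \in U(\mathfrak{g})$ and all $a \in A$. The inclusion $\operatorname{Hom}^{-A}(A,k) \subseteq \operatorname{Hom}_{U(\mathfrak{g})-}(A,k)$ is therefore immediate.

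For the reverse inclusion the only thing missing is the non-degeneracy statement: if $b\in A$ satisfies $\langle u, b\rangle = 0$ for every $u\in U(\mathfrak{g})$, then $b = 0$. Under the standing hypotheses of this subsection ($k$ algebraically closed of characteristic zero, $G$ connected), this is the super-analogue of the classical fact that the hyperalgebra at the identity of a connected affine algebraic group in characteristic zero---which coincides with $U(\operatorname{Lie}(G))$---separates points of the coordinate algebra. The approach I propose is to deduce it by combining the right $H$-super-comodule algebra decomposition $A \simeq \wedge(W)\otimes H$ of \eqref{eq:isom} with the two non-degenerate pairings on the factors: the perfect pairing $\wedge(\mathfrak{g}_1)\times \wedge(W)\to k$ on the exterior side, via the PBW embedding $\wedge(\mathfrak{g}_1)\hookrightarrow U(\mathfrak{g})$ and the standard Berezinian duality of finite-dimensional exterior algebras; and the classical non-degeneracy of $U(\mathfrak{g}_0)\times H\to k$ for the connected affine algebraic group $G_{\mathsf{ev}}$ on the even side.

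The main obstacle I anticipate is that \eqref{eq:isom} is only an isomorphism of right $H$-super-comodule algebras and is not canonical; in particular, it does not in general respect the coproduct of $A$, so the pairing $\langle\ ,\ \rangle$ does not split as a tensor product of the two factor pairings through it. To circumvent this, I plan to filter $A$ by the powers of the odd augmentation ideal (equivalently, by the number of exterior generators drawn from $W$), pass to the associated graded---where the splitting becomes compatible with the coalgebra structure, so that the pairing does decompose as a tensor product---and infer non-degeneracy on $A$ from termwise non-degeneracy on the associated graded by a standard filtration argument.
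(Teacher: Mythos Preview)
Your proposal is correct and follows essentially the same approach as the paper: both reduce the equality to injectivity of the natural map $A \to U(\mathfrak{g})^*$ arising from the canonical pairing (your ``non-degeneracy on the $A$-side''), and both invoke the decomposition \eqref{eq:isom} to establish this injectivity. The paper's proof is terser, simply citing \cite[Proposition~20]{M2} for the injectivity step, whereas you spell out the underlying filtration argument---your identification of the obstacle (that \eqref{eq:isom} does not respect the coproduct) and the remedy (pass to the associated graded, where the pairing splits as a tensor product) is exactly right.
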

\begin{proof}
Since $G_{\mathsf{ev}}$ is connected, the same argument as proving \cite[Proposition 20]{M2}, that uses \eqref{eq:isom} essentially,
shows that the natural Hopf super-algebra map from $A$ to the dual Hopf super-algebra of $U(\mathfrak{g})$,
which arises from the pairing $\langle \ , \ \rangle : U(\mathfrak{g})\times A \to k$ above, is injective. 
This implies the desired result. 
\end{proof}

It is proved by \cite[Proposition 4.25]{MS1} that the left $U(\mathfrak{g})$-module map
\begin{equation*}\label{eq:eta}
\eta : A \to \operatorname{Hom}_{U(\mathfrak{g}_0)-}(U(\mathfrak{g}), H) 
\end{equation*}
defined by
\begin{equation*}
\eta(a)(u)=\sum_{(a)}\pi(a_{(1)})\, \langle u,\ a_{(2)}\rangle,\quad a \in A,\ u \in U(\mathfrak{g}) 
\end{equation*}
is an isomorphism, where $\pi : A \to H=A/(A_1)$ is the natural projection.

Recall from \cite[Appendix]{SZ} the following argument, modifying it so as to work on the opposite side. 
Let $n=\dim \mathfrak{g}_1$, and choose arbitrarily a basis $x_1, x_2,\dots, x_n$ of $\mathfrak{g}_1$. 
Let $\Lambda_n$ be as in \eqref{eq:increasing_sequence}. 
Then $U(\mathfrak{g})$ has 
\begin{equation}\label{eq:free_basis}
x_I=x_{i_1}x_{i_2}\dots x_{i_r},\quad I=(i_1,i_2,\dots,i_r)\in \Lambda_n,
\end{equation}
as a left (and right) $U(\mathfrak{g}_0)$-free basis; we have $x_{\emptyset}=1$ by convention. 
Let $L=(1,2,\dots, n)$ denote the longest sequence in $\Lambda_n$. 
Let $\varpi: U(\mathfrak{g})\to U(\mathfrak{g}_0)$ be the map which assigns the special coefficient $c_L$
to every element $\sum_{I\in \Lambda_n}c_Ix_I$ in $U(\mathfrak{g})$, where $c_I\in U(\mathfrak{g}_0)$.
This is clearly left $U(\mathfrak{g}_0)$-linear. 
Given $x \in \mathfrak{g}_0$, let $\operatorname{ad}' \! x : \mathfrak{g}_1 \to \mathfrak{g}_1,\ 
(\operatorname{ad}' \! x)(y) = [x, y]$ denote the adjoint action on $\mathfrak{g}_1$. 
Let $\alpha : U(\mathfrak{g}_0) \to U(\mathfrak{g}_0)$ be the algebra automorphism determined by
\[ \alpha(x) = x + \operatorname{Tr}(\operatorname{ad}' \! x)1, \quad x \in \mathfrak{g}_0. \]
Let $\bar{\alpha}=\alpha^{-1}$ denote the inverse of $\alpha$. 
Then one sees that 
\[
\varpi(x_I\alpha(c))=\begin{cases}\, c & \text{if}~~I=L; \\ \, 0 & \text{otherwise}, \end{cases}
\]
where $I\in \Lambda_n$ and $c \in U(\mathfrak{g}_0)$.
(To see this,
one may replace $U(\mathfrak{g})$ with the naturally associated graded Hopf super-algebra $U(\mathfrak{g})_{\mathrm{gr}}$
as given in \cite[Section 3]{M1},
or in other words, one may suppose $[\mathfrak{g}_1,\mathfrak{g}_1]=0$.) It follows that
$\varpi :U(\mathfrak{g})\to U(\mathfrak{g}_0)_{\bar{\alpha}}$ is right $U(\mathfrak{g}_0)$-linear,
where $U(\mathfrak{g}_0)_{\bar{\alpha}}$ indicates the right $U(\mathfrak{g}_0)$-module obtained from 
$U(\mathfrak{g}_0)$ by twisting the action through $\bar{\alpha}$. 
The associated pairing
\begin{equation*}\label{eq:pairing} 
(\ , \ )_{\varpi} : U(\mathfrak{g}) \times U(\mathfrak{g})\to  U(\mathfrak{g}_0),\quad (u,v)_{\varpi}=\varpi(uv) 
\end{equation*}
makes $U(\mathfrak{g}) \supset U(\mathfrak{g}_0)$ into a \emph{free} 
$\alpha$-\emph{Frobenius extension}. This means
that given a left $U(\mathfrak{g}_0)$-module $M$, the left 
$U(\mathfrak{g})$-module map
\begin{equation}\label{eq:U-isom}
U(\mathfrak{g})\otimes_{U(\mathfrak{g}_0)} M \to
\operatorname{Hom}_{U(\mathfrak{g}_0)-}(U(\mathfrak{g}),\ {}_{\alpha}M), 
\end{equation}
which assigns to an element $v \otimes m\in U(\mathfrak{g})\otimes_{U(\mathfrak{g}_0)} M$, the left $U(\mathfrak{g}_0)$-module map
$U(\mathfrak{g})\to U(\mathfrak{g}_0)_{\bar{\alpha}}\otimes_{U(\mathfrak{g}_0)}M= {}_{\alpha}M$, $u \mapsto \alpha((u,v)_{\varpi})m$,
is an isomorphism. 
Here ${}_{\alpha}M$ indicates the twisted left $U(\mathfrak{g}_0)$-module, as before. 

Take as the $M$ in \eqref{eq:U-isom}, the twisted left $U(\mathfrak{g}_0)$-module ${}_{\bar{\alpha}}H$. 
Then the resulting isomorphism composed with $\eta$ gives an isomorphism 
\begin{equation*}\label{eq:A_to_UH}
A \simeq U(\mathfrak{g}) \otimes_{U(\mathfrak{g}_0)} {}_{\bar{\alpha}}H 
\end{equation*}
of left $U(\mathfrak{g})$-modules. 
Note that there uniquely exists a right $U(\mathfrak{g}_0)$-free basis $y_I, I\in \Lambda_n$, of $U(\mathfrak{g})$
which is \emph{dual} to the basis $x_I$ in \eqref{eq:free_basis} with respect to the pairing $(\ , \ )_{\varpi}$ 
in the sense that $(x_I, y_J)_{\varpi}=\delta_{I,J}$, the Kronecker delta. Then one sees that the last isomorphism 
is explicitly given by
\begin{equation}\label{eq:explicit_isom}
A\overset{\simeq}{\longrightarrow} U(\mathfrak{g})\otimes_{U(\mathfrak{g}_0)}{}_{\bar{\alpha}}H,\quad 
a\mapsto \sum_{(a), I\in \Lambda_n} y_I\otimes \pi ( a_{(1)})\, \langle x_I,\ a_{(2)}\rangle. 
\end{equation}
In view of the equation \eqref{eq:identity} and an analogous one, we have
\begin{equation*}
\operatorname{Hom}^{-A}(A, k) \simeq \operatorname{Hom}_{U(\mathfrak{g}_0)-}({}_{\bar{\alpha}}H, k )
= \operatorname{Hom}^{-H}({}_{\bar{\alpha}}H, k ).
\end{equation*}
It remains to prove that ${}_{\bar{\alpha}}H\simeq H$ or $H \simeq {}_{\alpha}H$ as right $H$-comodules.

Let $\delta : U(\mathfrak{g}_0) \to k$ be the algebra map determined by
\begin{equation}\label{eq:delta}
\delta(x) = \operatorname{Tr}(\operatorname{ad}' \! x),\quad x \in \mathfrak{g}_0;
\end{equation}
see \cite[(5.2)]{SZ}. 
The assumption $k=\bar{k}$ ensures that $G_{\mathsf{ev}}$ includes a split maximal torus, say, $T$. 
Let $\mathfrak{h}\subset \mathfrak{g}_0$ be
the corresponding Cartan subalgebra. 
Since the restriction $\delta|_{\mathfrak{h}}$ of $\delta$ to $\mathfrak{h}$ 
is an element in the character group $X(T)$ of $T$, it follows that the one-dimensional $U(\mathfrak{g}_0)$-module
structure given by $\delta$ arises from an $H$-comodule structure, whence $\delta$ is 
a grouplike in $H\, (\subset U(\mathfrak{g}_0)^*)$; see \cite[Part II, 1.20]{J}.
Since one sees that $\alpha$ coincides with 
$u \mapsto \sum_{(u)} u_{(1)}\, \delta(u_{(2)})$,
the left $U(\mathfrak{g}_0)$-module structure on 
${}_{\alpha}H$ arises from the right $H$-comodule structure
\[ {}_{\alpha}H \to {}_{\alpha}H \otimes H,\ \quad h 
\mapsto \sum_{(h)} h_{(1)} \otimes h_{(2)}\delta. \]
We see that $h \mapsto h \, \delta$ gives a desired isomorphism ${}_{\alpha}H \simeq H$, 
since $\delta$ is grouplike. 
This completes the proofs of Proposition \ref{prop:integral_isom} and of Theorem \ref{thm:integral}. 

\begin{rem}\label{rem:explicit_integral}
In view of \eqref{eq:explicit_isom}, we see that
the obtained isomorphism assigns 
to each $\psi$ in $\operatorname{Hom}^{-H}(H,k)$, the right $A$-comodule map
\begin{equation}\label{eq:explicit_integral}
\phi: A \to k,\quad \phi(a)=\sum_{(a)}\psi(\pi(a_{(1)})\delta^{-1})\, \langle z, \ a_{(2)} \rangle
\end{equation}
(cf.~\cite[(4.2)]{SZ}),\ where we set
\begin{equation*}\label{eq:z}
z:=\sum_{I\in \Lambda_n}x_I\, \varepsilon(y_I).
\end{equation*}
\end{rem}

Suppose that $k$ is a field of characteristic zero which may not be algebraically closed. 
Note that the definition \eqref{eq:delta} of the algebra map $\delta : U(\mathfrak{g}_0)\to k$
still makes sense. 

\begin{prop}[\text{cf.~\cite[Corollary~5.6]{SZ}}]\label{prop:unimodular}
Let $G$ be a quasi-reductive super-group \textup{(Remark \ref{rem:large_enough})} over the filed $k$ as above, 
or namely, a connected affine algebraic super-group over $k$ such that
$G_{\mathsf{ev}}$ is \textup{(}linearly\textup{)} reductive.  
\begin{itemize}
\item[(1)]
$G$ is unimodular if and only if
$\delta$ is trivial, or explicitly, $\delta(x)=0$, $x \in \mathfrak{g}_0$.
\item[(2)]
$G$ is unimodular if $G_{\mathsf{ev}}$ is semisimple.
\end{itemize}
\end{prop}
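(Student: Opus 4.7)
The plan is to reduce to the algebraically closed case and then extract the criterion from the explicit integral formula \eqref{eq:explicit_integral}.

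First, both the vanishing of $\delta$ on $\mathfrak{g}_0$ and the unimodularity of $G$ are invariant under the base extension to $\bar k$: the trace $\operatorname{Tr}(\operatorname{ad}'x)$ obviously is, and (as in the proof of Lemma \ref{lem1}) the one-dimensional $k$-spaces of left and of right integrals for $G$ extend faithfully flatly to those for $G_{\bar k}$, so the coincidence of these two subspaces inside $A^*$ descends from $\bar k$ to $k$. Hence we may assume $k=\bar k$.

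Under this assumption, $G_{\mathsf{ev}}$ is connected reductive in characteristic zero, thus linearly reductive, and so by Remark \ref{rem:Weissauer} it admits a nonzero two-sided integral $\psi$. Inserting $\psi$ into \eqref{eq:explicit_integral} produces a nonzero right integral $\phi$ for $G$. For part~(1), the plan is to compute the left translation
\[ \tau\colon A\to A,\qquad \tau(a):=\sum_{(a)} a_{(1)}\,\phi(a_{(2)}), \]
and show
\[ \tau(a)=\phi(a)\,\widetilde{\delta}, \]
where $\widetilde{\delta}\in A$ is the unique grouplike of $A$ lifting the grouplike $\delta\in H$ (every character of $G$ factors through $G_{\mathsf{ev}}$, so grouplikes of $A$ and of $H$ are in natural bijection). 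Granting this identity, $\phi$ is also a left integral---equivalently, by uniqueness, $G$ is unimodular---iff $\widetilde{\delta}=1$, iff $\delta=\varepsilon$, iff $\delta(x)=0$ for every $x\in\mathfrak{g}_0$. This establishes (1).

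The derivation of $\tau(a)=\phi(a)\widetilde{\delta}$ is the main obstacle. It parallels \cite[Corollary 5.6]{SZ} and proceeds by substituting \eqref{eq:explicit_integral} into the definition of $\tau$ and expanding via coassociativity, the explicit coproduct of $z=\sum_{I\in\Lambda_n}x_I\,\varepsilon(y_I)$ in $U(\mathfrak{g})$, and the Frobenius duality $(x_I,y_J)_{\varpi}=\delta_{I,J}$. The $\delta^{-1}$ twist in \eqref{eq:explicit_integral}, engineered to absorb the $\bar\alpha$-twist of \eqref{eq:explicit_isom} on the right side, does not cancel on the left-translated side and leaves precisely the residual factor $\widetilde{\delta}$.

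For part~(2), semisimplicity of $G_{\mathsf{ev}}$ means $\mathfrak{g}_0$ is a semisimple Lie algebra, hence perfect: $[\mathfrak{g}_0,\mathfrak{g}_0]=\mathfrak{g}_0$. Since the trace of any commutator of linear operators vanishes, $\delta(x)=\operatorname{Tr}(\operatorname{ad}'x)$ vanishes on all of $\mathfrak{g}_0$, and part~(1) then yields the unimodularity of $G$.
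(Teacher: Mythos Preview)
Your outline matches the paper's proof closely: reduce to $k=\bar k$, take a two-sided integral $\psi$ for $G_{\mathsf{ev}}$, form the right integral $\phi$ via \eqref{eq:explicit_integral}, and argue that $G$ is unimodular iff the distinguished grouplike attached to $\phi$ is trivial iff $\delta$ is trivial. Part~(2) is identical to the paper's.

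The one substantive difference is how you establish that the distinguished grouplike equals (the lift of) $\delta$. You propose to compute $\tau(a)=\sum a_{(1)}\phi(a_{(2)})$ directly in $A$, expanding the coproduct of $z$ and invoking the Frobenius duality $(x_I,y_J)_{\varpi}=\delta_{I,J}$; but you only sketch this and concede it is ``the main obstacle.'' The paper avoids this computation entirely: it invokes the general existence of the distinguished grouplike $\gamma\in A$ with $\gamma\,\phi(a)=\sum a_{(1)}\phi(a_{(2)})$, then applies the projection $\pi:A\to H$ to both sides. On the right-hand side the pairing $\langle z,\,-\,\rangle$ is left alone, while the first two Sweedler factors collapse via the elementary identity
\[
\sum_{(h)} h_{(1)}\,\psi(h_{(2)}\delta^{-1})=\delta\,\psi(h\delta^{-1}),\qquad h=\pi(a),
\]
which holds because $\psi$ is a left integral for $H$ and $\delta$ is grouplike. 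This immediately yields $\pi(\gamma)\,\phi(a)=\delta\,\phi(a)$, hence $\pi(\gamma)=\delta$. Combined with the injectivity of $\pi$ on even grouplikes (your observation that characters of $G$ factor through $G_{\mathsf{ev}}$, or the paper's use of \cite[Proposition~4.6(3)]{M1}), this gives $\gamma=1\Leftrightarrow\delta=1$.

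So your approach is correct in principle and could be completed along the lines of \cite{SZ}, but the paper's trick of pushing the identity down to $H$ via $\pi$ is shorter and sidesteps the coproduct-of-$z$ computation you left open.
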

\begin{proof}
(1)\ By base extension we may suppose $k=\bar{k}$, so that $\delta$ is a grouplike in $H$. 
Since $G_{\mathsf{ev}}$ is linearly reductive, we can choose a left and right integral $\psi$ such that $\psi(1)=1$
(Remark \ref{rem:Weissauer}), to which corresponds 
a non-zero right integral $\phi$ for $G$ given by the formula \eqref{eq:explicit_integral}. 

In general, the uniqueness on integrals gives rise to the so-called \emph{distinguished grouplike} \cite[p.197]{DNR}.
In the present situation it is the homogeneous (necessarily, even) grouplike $\gamma \in A$ which satisfies
$\gamma \, \phi(a)= \sum_{(a)}a_{(1)}\phi(a_{(2)})$, or more explicitly,
\begin{equation}\label{eq:gz}
\gamma\, \big< z, \ \sum_{(a)} \psi(\pi(a_{(1)})\delta^{-1})\, a_{(2)}\big> =
\sum_{(a)}a_{(1)}\big< z, \ \psi(\pi(a_{(2)})\delta^{-1})\, a_{(3)}\big>
\end{equation}
for all $a \in A$; see also \cite[Proposition 1]{SZ0}, \cite[Proposition 2.2]{SZ}. 
This $\gamma$ equals the identity element if and only if $\phi$ is a \emph{left} integral, as well. 
Therefore, we should prove that $\gamma=1$ if and only if $\delta=1$. 
By \cite[Proposition 4.6~(3)]{M1}, any inclusion $B \hookrightarrow A$ of a Hopf super-subalgebra induces an injection $B/(B_1) \to A/(A_1)=H$
of Hopf algebras. This, applied to the Hopf super-subalgebra $B=k\Gamma$ spanned by all even grouplikes $\Gamma$,
tells us that we have only to prove $\pi(\gamma)=\delta$, 
since the induced injection is then $\pi|_{k\Gamma} : (B/(B_1)=)\, k\Gamma \to H$. 
Apply $\pi$ to both sides of \eqref{eq:gz}.
Then the desired equality follows
by using the equation
\[
\sum_{(a)}\pi(a_{(1)})\, \psi(\pi(a_{(2)})\delta^{-1})=\delta \, \psi(\pi(a)\delta^{-1}),
\] 
which holds since $\psi$ is a left integral . 

(2)\
If $G_{\mathsf{ev}}$ (or $\mathfrak{g}_0$) is semisimple, or equivalently, if $\mathfrak{g}_0=[\mathfrak{g}_0,\mathfrak{g}_0]$,
then $\delta$ is necessarily trivial. Therefore, $G$ is unimodular by Part 1.
\end{proof}

\subsection{Quotients by affine algebraic super-groups with integral}\label{subsec:affinity_integral}
Return to the situation where $k$ is an arbitrary field.
We now come to prove the following main result of ours.

\begin{theorem}\label{thm:affinity_integral}
Suppose that an affine algebraic super-group $G$ with integral acts 
strongly freely on 
an affine super-scheme $X$. 
Then the dur sheaf $X \tilde{\tilde{/}} G$ is the affine super-scheme represented by 
$\cO(X)^{G}$, and $X\to X \tilde{\tilde{/}} G$ is a $G$-super-torsor.
In addition, if $X$ is Noetherian, then the affine super-scheme $X \tilde{\tilde{/}} G$
is Noetherian, and it coincides with $X \tilde{/} G$. 
\end{theorem}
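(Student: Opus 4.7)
My plan is to verify the equivalent condition (c) of Theorem \ref{thm:affinity} (1): namely, that the inclusion $C := \cO(X)^{G} \hookrightarrow B := \cO(X)$ is an fpqc covering and that the canonical map $\beta$ of \eqref{eq:beta} is bijective. Once that is in place, all remaining assertions will follow formally: the representability of $X \tilde{\tilde{/}} G$ by $C$ and the torsor statement come from Theorem \ref{thm:affinity} (1) together with Definition \ref{DTOR} and Remark \ref{rem:over_ring}, and the Noetherian conclusion comes from Theorem \ref{thm:affinity} (2). Note that strong freeness (Definition \ref{DSFr}) already supplies surjectivity of \eqref{eq:alpha}, which factors through $\beta$; hence $\beta$ itself is surjective from the outset.

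The strategy is to bosonize to the ordinary, non-super Hopf-algebraic setting, following Section \ref{subsec:boson_tech}. Set $A := \cO(G)$, $\hat{A} := A \lboson \mathbb{Z}_2$ and $\hat{B} := B \rtimes \mathbb{Z}_2$, so that $C = \hat{B}^{\mathsf{co}\hspace{0.3mm}\hat{A}}$. Since $G$ has an integral, Proposition \ref{prop:integral_boson} produces a non-zero integral for $\hat{A}$, i.e.\ $\hat{A}$ is a co-Frobenius ordinary Hopf algebra. Meanwhile, surjectivity of $\beta$ transfers through Lemma \ref{lem:bijective_beta} to surjectivity of the bosonized canonical map $\hat{\beta}$.

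The core step will then be an invocation of the Schauenburg--Schneider theorem of \cite{SS}: for a right comodule algebra over a co-Frobenius Hopf algebra, surjectivity of the canonical map already forces that map to be bijective and forces the algebra to be faithfully flat over its subalgebra of coinvariants. Applied to $\hat{B}$ over $\hat{A}$, this yields bijectivity of $\hat{\beta}$ together with faithful flatness of $\hat{B}$ over $C$. Bijectivity of $\beta$ then descends by Lemma \ref{lem:bijective_beta}. For the faithful flatness of $B$ over $C$, I observe that $\hat{B} = B \otimes k\mathbb{Z}_2$ decomposes as a left $C$-module into two copies of $B$, so $B$ inherits faithful flatness from $\hat{B}$; combining with Lemma \ref{lem:basic_super-module} (2) shows that $C \hookrightarrow B$ is an fpqc covering, completing condition (c) of Theorem \ref{thm:affinity} (1).

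The step I expect to be the main obstacle is the precise invocation of the Schauenburg--Schneider result: one must check that the version in \cite{SS} really takes only the co-Frobenius property of $\hat{A}$ and surjectivity of the canonical map as input, with no hidden finiteness or projectivity hypothesis on $\hat{B}$ that would fail in our generality. Everything else reduces to mechanical translation through the bosonization dictionary already assembled in Sections \ref{subsec:affinity_criteria} and \ref{subsec:boson_tech}.
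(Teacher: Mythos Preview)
Your overall strategy---bosonize and then invoke Schauenburg--Schneider \cite{SS}---is exactly what the paper does. The gap lies precisely where you feared it would: the version of \cite{SS} that applies here (their Corollary~3.3, Case~(4) with $H=Q$, the co-Frobenius case) yields bijectivity of $\hat{\beta}$ and \emph{projectivity} of $\hat{B}$ as a left $C$-module, but not faithful flatness. Your observation that $\hat{B}\cong B\oplus B$ as left $C$-modules then tells you $B$ is $C$-projective, hence flat; what is missing is that $B$ is a \emph{generator} of $C$-modules, i.e.\ the faithfulness. Co-Frobenius is strictly weaker than cosemisimple, and it is only in the cosemisimple case (their Theorem~4.10) that \cite{SS} delivers the generator property directly.

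The paper treats this missing generator step as the heart of the proof. In characteristic zero, it passes to the linearly reductive group $G_{\mathsf{ev}}$ (legitimate by Theorems~\ref{thm:integral} and~\ref{thm:Sullivan}(1)), applies \cite[Theorem~4.10]{SS} to the cosemisimple Hopf algebra $H\otimes k\mathbb{Z}_2$ to see that $B$ is a generator over the intermediate ring $D=B^{G_{\mathsf{ev}}}$, and then shows $D$ is a generator over $C$ by proving it is finitely generated: here the isomorphism $A\simeq\wedge(W)\otimes H$ of \eqref{eq:isom} and the $G$-equivariance of $\beta$ combine to give $B\otimes_C D\simeq B\otimes\wedge(W)$, whence finite generation after base change, and Lemma~\ref{lem:finitely_generated} descends this to $C$ (then Lemma~\ref{lem:generator} finishes). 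In positive characteristic the argument is parallel, with a split maximal torus $T\subset G_{\mathsf{ev}}$ playing the role of $G_{\mathsf{ev}}$. So your plan is correct up to and including the bijectivity of $\beta$; you still owe the generator argument, and that is where the structural input about $G_{\mathsf{ev}}$ (via Theorem~\ref{thm:integral}) actually enters.
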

\begin{proof}
Let us apply the argument of Section \ref{subsec:boson_tech}, using the same notation. 
It suffices to prove 
\begin{itemize}
\item[(1)]
$\hat{\beta} : \hat{B}\otimes_C \hat{B}\to \hat{B}\otimes \hat{A}$
is bijective, and 
\item[(2)]
$B$ is a projective generator of $C$-modules. 
\end{itemize}
Note that $\hat{\beta}$ is surjective, and $\hat{A}$ is co-Frobenius by Proposition \ref{prop:integral_boson}. 
Corollary 3.3 of \cite{SS} (in Case (4) applied when $H=Q$) ensures (1)
and that $\hat{B}$ is projective as a left $C$-module. Since the latter implies that
$B$ is projective as a $C$-module, it remains to show that it is a generator.
We are going to use:
\begin{itemize}
\item[(3)]
We have the isomorphism $A\simeq \wedge(W)\otimes H$ as in \eqref{eq:isom}, 
in which $W$ and so $\wedge(W)$ are now finite-dimensional;
\item[(4)]
The isomorphism
$\beta : B \otimes_C B \to B \otimes A$ is $G$-equivariant, where
$G$ acts (or $A$ co-acts) on the right tensor-factors in $B \otimes_C B$ and in $B\otimes A$.   
\end{itemize}

Suppose $\op{char}k=0$. By Theorems \ref{thm:integral} and \ref{thm:Sullivan}, 
$G_{\mathsf{ev}}$ is linearly reductive.  
Note that the restricted action by $G_{\mathsf{ev}}$ on $X$ is strongly free. Let $D=B^{G_{\mathsf{ev}}}$ 
be the super-subalgebra of $G_{\mathsf{ev}}$-invariants in $B$, which clearly includes $C$. 
By \cite[Theorem 4.10]{SS}
applied to the right comodule algebra $\hat{B}$ over the cosemisimple
Hopf algebra $H \otimes k \mathbb{Z}_2$, we see that $\hat{B}$ and so $B$ are
(projective) generators of left (and right) $D$-modules. Hence $D$ is a projective
$C$-module. It remains to prove that $D$ is a generator of $C$-modules. 
We aim to prove that $D$ is finitely generated as a $C$-module;
this will imply the desired result by Lemma \ref{lem:generator}.

Consider invariants by the restricted $G_{\mathsf{ev}}$-action.
One sees from (3) that $R:=A^{G_{\mathsf{ev}}}$ is isomorphic to $\wedge(W)$, and $(B\otimes A)^{G_{\mathsf{ev}}}=
B\otimes R$. Since $B$ is flat as a right $C$-module, we have $(B\otimes_C B)^{G_{\mathsf{ev}}}=B\otimes_C D$,
whence $\beta$ restricts to the isomorphism
\[ B \otimes_C D \overset{\simeq}{\longrightarrow} B \otimes R. \]
Hence the projective left $C$-module $D$ is finitely generated after the base extension to $B$.
The result we aim at follows by Lemma \ref{lem:finitely_generated} below.

Suppose $\op{char}k>0$, and let $F=G_{\mathsf{ev}}$. 
By base extension we may suppose $k =\bar{k}$. Then we have the split short 
exact sequence $F^{\circ}\to F\to \pi_0F$ of affine algebraic groups. By Theorems 
\ref{thm:integral} and \ref{thm:Sullivan}, $T:=F^{\circ}_{\mathsf{red}}$ is a torus. Since
$T$ is smooth, it follows by \cite[Proposition 1.10]{MO} that the closed embedding $T \hookrightarrow F^{\circ}$
of right $T$-equivariant affine schemes splits.  
We have, therefore, an isomorphism
\[ H \simeq \mathscr{O}(\pi_0F) \otimes \mathscr{O}(F^{\circ})^T \otimes \mathscr{O}(T) \]
of right $\mathscr{O}(T)$-comodule algebras, in which $\mathscr{O}(\pi_0F)$
and $\mathscr{O}(F^{\circ})^T$ are both finite-dimensional;
the former is separable while the latter is local. 
Since $T$ is linearly reductive, we can modify the proof in characteristic zero, replacing 
$G_{\mathsf{ev}}$ with $T$, to obtain the result we aim at. 
\end{proof}

\begin{lemma}\label{lem:finitely_generated}
Let $C \subset B$ be an inclusion of non-commutative rings. A projective left $C$-module $P$
is finitely generated if $B \otimes_CP$ is a finitely generated left $B$-module.
\end{lemma}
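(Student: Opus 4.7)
The plan is to exploit projectivity of $P$ to view it as a direct summand of a free $C$-module, and then use finite generation of $B \otimes_C P$ to bound the supports of the elements of $P$. Since $P$ is projective, fix a free left $C$-module $F = C^{(I)}$ and a submodule $Q \subseteq F$ with $P \oplus Q = F$. Tensoring with $B$ over $C$ yields the decomposition $B \otimes_C P \oplus B \otimes_C Q = B^{(I)}$ inside the free left $B$-module $B^{(I)}$, realising $B \otimes_C P$ as a $B$-submodule of $B^{(I)}$.

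Next, pick $B$-module generators $v_1, \dots, v_n$ of $B \otimes_C P$ and express each as a finite sum $v_j = \sum_k b_{jk} \otimes p_{jk}$ of simple tensors with $p_{jk} \in P \subseteq F$. Each $p_{jk}$ has finite support in $F$, so the union $J \subseteq I$ of these finitely many supports is itself finite. Every $v_j$, regarded as an element of $B^{(I)}$, therefore lies inside the finitely generated free submodule $B^J \subseteq B^{(I)}$ spanned by the basis vectors indexed by $J$; hence $B \otimes_C P \subseteq B^J$.

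Finally, I would transport this containment back to $C$-modules using that $C \hookrightarrow B$ is an inclusion. The induced map $C^{(I)} \to B^{(I)}$ is injective and sends the $i$-th standard basis vector to itself, so for every $x \in C^{(I)}$ the supports of $x$ and of $1 \otimes x \in B^{(I)}$ agree. For $x \in P$ we have $1 \otimes x \in B \otimes_C P \subseteq B^J$, whence $x \in C^J$; thus $P \subseteq C^J$. Since $P$ is a direct summand of $C^{(I)}$ and is contained in the summand $C^J$, a short check (writing any $x \in C^J$ as $p + q \in P \oplus Q$ shows $q = x - p \in C^J$ as well) gives $C^J = P \oplus (Q \cap C^J)$, so $P$ is a direct summand of a finitely generated free $C$-module and is thus finitely generated.

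The only point where the hypotheses really bite is in the last step: injectivity of $C \hookrightarrow B$ is precisely what is needed to transfer the support bound back from $B^{(I)}$ to $C^{(I)}$, since without it nonzero coordinates in $C$ could vanish in $B$ and hide additional support. The remaining steps are routine finite-support and direct-summand manipulations.
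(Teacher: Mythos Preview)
Your proof is correct and follows essentially the same approach as the paper's: embed $P$ as a direct summand of a free module $C^{(I)}$, use finite generation of $B\otimes_C P$ to confine it to $B^J$ for some finite $J\subset I$, and then use the injectivity of $C\hookrightarrow B$ to pull back to $P\subseteq C^J$. The paper concludes slightly more quickly by noting that the projection $C^{(I)}\to P$ restricted to $C^J$ is surjective, but your verification that $C^J=P\oplus(Q\cap C^J)$ amounts to the same thing.
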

\begin{proof}
By the projectivity $P$ is included as a direct summand in a free left $C$-module $F=\bigoplus_{i \in I}C_i$,
where $C_i=C$. It suffices to prove that there exists a finite subset $I_0 \subset I$
such that $P$ is included in $\bigoplus_{i \in I_0}C_i$, since the last direct sum then projects
onto $P$. By the assumption there exists a finite subset $I_0 \subset I$ such that
$B \otimes_C P$ is included in $\bigoplus_{i \in I_0}B_i\, (\subset B \otimes_C F)$, where $B_i=B\otimes_C C_i$. 
Hence $P$ is included in $F \cap \bigoplus_{i \in I_0}B_i=\bigoplus_{i \in I_0}C_i$. 
\end{proof}

\subsection{Remarks and an example}\label{subsec:example}
We remark that the subsequent preprint \cite{MOe} by Oe and the first named author 
discusses super-torsors, and contains a result, Theorem 1.8, which is reformulated as
follows, restricted to the affinity situation.

\begin{theorem}\label{thm:MOe}
Suppose that $k$ is a field of characteristic $\ne 2$. Suppose that an affine algebraic super-group $G$ acts strongly freely
on a Noetherian affine super-scheme $X$. Assume the following:
\begin{itemize}
\item[(a)] $G$ and $X$ are both smooth \cite[Section A.1]{MZ2}; 
\item[(b)] The dur sheaf $X_{\mathsf{ev}}\tilde{\tilde{/}}G_{\mathsf{ev}}$ associated with the
induced \textup{(}necessarily, strongly free\textup{)} action by the affine algebraic group $G_{\mathsf{ev}}$ on the Noetherian affine scheme $X_{\mathsf{ev}}$ 
is affine; it then necessarily follows that $X_{\mathsf{ev}}\tilde{\tilde{/}}G_{\mathsf{ev}}$ coincides with
$X_{\mathsf{ev}}\tilde{/}G_{\mathsf{ev}}$, and is Noetherian and smooth. 
\end{itemize}
Then the dur sheaf $X\tilde{\tilde{/}}G$ is a Noetherian smooth affine super-scheme, and coincides with the sheaf  
$X\tilde{/}G$. Moreover, 
$X \to X \tilde{\tilde{/}}G\, (=X\tilde{/}G)$ is a $G$-super-torsor, and 
we have 
\begin{equation}\label{eq:X/G}
\textup{(i)}\ \, (X \tilde{/} G)_{\mathsf{ev}}=X_{\mathsf{ev}}\tilde{/}G_{\mathsf{ev}}; \quad 
\textup{(ii)}\ \, X\simeq \big((X \tilde{/} G)\times_{(X\tilde{/}G)_{\mathsf{ev}}}X_{\mathsf{ev}}\big)\times^{G_{\mathsf{ev}}}G,
\end{equation}
where \textup{(ii)} is an isomorphism of right $G$-equivariant super-schemes 
over $X\tilde{/}G$.
\end{theorem}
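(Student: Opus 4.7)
The plan is to verify the affinity criterion of Theorem \ref{thm:affinity}, with the essential new input being that the smoothness hypothesis (a) combines with the structural decomposition (\ref{eq:isom}) to lift information from the even quotient in (b) to the full super-situation. Write $A=\cO(G)$, $B=\cO(X)$, $C=B^G$, and pass through bosonization so that, by Lemma \ref{lem:bijective_beta}, bijectivity of $\hat{\beta}$ and $\hat{A}$-injectivity of $\hat{B}$ replace the corresponding conditions on $\beta$ and $B$. Strong freeness already gives surjectivity of $\beta$ and hence $\hat{\beta}$, so what remains is to establish bijectivity of $\beta$ and injectivity of $B$ as a right $A$-comodule, i.e.\ condition (b) of Theorem \ref{thm:affinity}(1).

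The first step is to reduce to the even level. By (\ref{eq:isom}), $A\simeq \wedge(W)\otimes H$ as right $H$-super-comodule algebras, and smoothness of $X$ supplies a right $A$-equivariant local description of the form $B\simeq \cO(X_{\mathsf{ev}})\otimes\wedge(V)$ for a finite-dimensional odd super-vector space $V$ of odd fibre generators; in particular $B$ is co-free over $\cO(X_{\mathsf{ev}})=B/(B_1)$, and the induced action of $G_{\mathsf{ev}}$ on $X_{\mathsf{ev}}$ is strongly free. Hypothesis (b) together with the non-super form of Theorem \ref{thm:affinity} (Oberst's original result \cite{O}) then gives that $\cO(X_{\mathsf{ev}})$ is $H$-injective, that the even analog $\beta_{\mathsf{ev}}$ is bijective, and that $\cO(X_{\mathsf{ev}})^{G_{\mathsf{ev}}}\hookrightarrow \cO(X_{\mathsf{ev}})$ is an fpqc covering.

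Next I would lift these properties to the super situation by a tensor-factor argument. Co-freeness of $A$ over $H$ and of $B$ over $\cO(X_{\mathsf{ev}})$ reduces $A$-comodule injectivity of $B$ to $H$-comodule injectivity of $\cO(X_{\mathsf{ev}})$, the extra exterior-algebra factors being finite-dimensional and auto-injective. Bijectivity of $\beta$ follows from that of $\beta_{\mathsf{ev}}$ by tensoring with the isomorphism $\wedge(V)\otimes \wedge(V)\overset{\simeq}{\to}\wedge(V)\otimes\wedge(W)$ extracted from strong freeness along the odd directions; $\operatorname{char}k\ne 2$ keeps the super-symmetry signs benign. Theorem \ref{thm:affinity} now yields $X\tilde{\tilde{/}}G=\operatorname{Spec}C$, its coincidence with $X\tilde{/}G$ via the Noetherian hypothesis, the $G$-torsor property (Definition \ref{DTOR}), and smoothness of $X\tilde{/}G$ by faithfully flat descent along $X\to X\tilde{/}G$.

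For the structural identities, (i) amounts to $C/(C_1)=\cO(X_{\mathsf{ev}})^{G_{\mathsf{ev}}}$, which follows because the right $\wedge(V)$-decomposition of $B$ makes the functor of $G$-invariants commute with projection onto the even quotient. The isomorphism (ii) is verified functorially: the torsor isomorphism $X\times G\simeq X\times_{X\tilde{/}G}X$ pulled back along $(X\tilde{/}G)_{\mathsf{ev}}\hookrightarrow X\tilde{/}G$ yields a $G$-torsor over $(X\tilde{/}G)_{\mathsf{ev}}$ whose restriction to the even part is $X_{\mathsf{ev}}\to X_{\mathsf{ev}}\tilde{/}G_{\mathsf{ev}}$, and (ii) is the resulting contracted-product reconstruction. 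The main obstacle I anticipate is the lifting step in the third paragraph: without an integral on $G$, injectivity of $B$ as an $A$-comodule is no longer supplied by the Schauenburg–Schneider input of \cite{SS} used in Theorem \ref{thm:affinity_integral}, so one must rely directly on the explicit $\wedge(V)$- and $\wedge(W)$-decompositions that smoothness provides, and on keeping track of genuine $G$-equivariance (not merely $G_{\mathsf{ev}}$-equivariance) when choosing these splittings.
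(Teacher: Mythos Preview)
The paper does not contain a proof of this theorem: it is quoted from the forthcoming paper \cite{MOe} and presented in Section~\ref{subsec:example} explicitly as an external result, stated without proof and used only for comparison with Theorem~\ref{thm:affinity_integral}. There is therefore no ``paper's own proof'' to compare against.

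As for the sketch itself, the decisive step is unjustified. You assert that smoothness of $X$ yields a \emph{right $A$-equivariant} isomorphism $B\simeq \cO(X_{\mathsf{ev}})\otimes\wedge(V)$. What smoothness actually gives (via \cite[Theorem~A.2]{MZ2}) is only a super-algebra isomorphism of this shape, with no compatibility with the $A$-coaction; the section $\bar{B}\to B$ of $B\to\bar{B}=B/(B_1)$ is chosen arbitrarily. Without equivariance, the $A$-coaction on $B$ does not decompose along the tensor factors, so neither the reduction of $A$-injectivity of $B$ to $H$-injectivity of $\bar{B}$, nor the ``tensor-factor'' passage from bijectivity of $\beta_{\mathsf{ev}}$ to bijectivity of $\beta$, goes through as written. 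The claimed isomorphism $\wedge(V)\otimes\wedge(V)\overset{\simeq}{\to}\wedge(V)\otimes\wedge(W)$ is likewise not something strong freeness hands you; strong freeness is a statement about the full map \eqref{eq:alpha}, not about an odd-direction piece that can be split off. You correctly flag this equivariance issue in your last paragraph, but it is not a residual technicality: it is the whole content of the theorem, and producing such an equivariant splitting (or circumventing the need for one) is presumably what the argument in \cite{MOe} supplies.
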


Let us be in the situation of the theorem above, to restate the conclusions (i) and (ii) in the Hopf-algebra language. 
Suppose $\cO(G)=A$, $\cO(G_{\mathsf{ev}})=H$
and $\cO(X)=B$, as before. 
The affine super-scheme $X_{\mathsf{ev}}$ associated with $X$ is defined in the same way as the $G_{\mathsf{ev}}$ in 
Section \ref{subsec:preliminary_integral}, so that we have $\cO(X_{\mathsf{ev}})=B/(B_1)$. 
We now have  $\cO(X\tilde{/}G)=B^G$, and this is proved to be a Noetherian smooth super-algebra. 
Let $C=B^G$, as before, and define $\bar{B}:=B/(B_1)$,\ $\bar{C}:=C/(C_1)$. 
Then the conclusion (i)  means $\bar{C}=\bar{B}^{G_{\mathsf{ev}}}$. 
By the Noetherian smoothness of $C$, the canonical projection $C\to \bar{C}$ splits. Moreover,
if one chooses arbitrarily a section $\bar{C} \to C$, and regards $C$ as a super-algebra over $\bar{C}$ through the section,
then $C$ is isomorphic to the exterior algebra $\wedge_{\bar{C}}(P)$, where $P$ is a finitely projective $\bar{C}$-module;
see \cite[Theorem A.2]{MZ2}. The conclusion (ii) is restated as an isomorphism
\begin{equation}\label{eq:BCA}
B\ \simeq\ ( C\otimes_{\bar{C}} \bar{B} ) \square_HA\ (=C\otimes_{\bar{C}} (\bar{B}\square_HA ) ) 
\end{equation}
of right $A$-super-comodule algebras over $C$, by using the co-tensor product \cite[Section 2.3]{DNR}.

\begin{rem}\label{rem:compare}
To compare the theorem above with our Theorem \ref{thm:affinity_integral}, 
suppose that $k$ is a field of characteristic $\ne 2$, and  
let $G$ be an affine algebraic super-group. Recall that if $\operatorname{char} k=0$, then $G$
is necessarily smooth; see \cite[Proposition A.3]{MZ2}. 
Assume that $G$ has an integral, and it 
acts strongly freely on an affine super-scheme $X$. The theorem above
shows that the dur sheaf $X\tilde{\tilde{/}}G$ is an affine super-scheme,
and $X \to X\tilde{\tilde{/}}G$ is a $G$-super-torsor, under the assumptions:
(1$^{\circ}$)~$X$ is Noetherian and smooth, and (2$^{\circ}$)~if $\operatorname{char} k>2$, then $G$ is smooth, in addition. 
This is because the assumption (b) of the theorem is then satisfied; in characteristic zero, the assumption is ensured
by Theorem \ref{thm:known} in Case (ii),
since $G_{\mathsf{ev}}$ is linearly reductive by Sullivan's Theorem \ref{thm:Sullivan} (1).  
We remark that in positive characteristic, the assumption (2$^{\circ}$) is satisfied 
if and only if $G_{\mathsf{ev}}^{\circ}$ is a torus, as is seen from Sullivan's Theorem \ref{thm:Sullivan} (2). 
In this case it was probably known that
the assumption (b) is satisfied; at least now, this (b) is
ensured by our Theorem \ref{thm:affinity_integral}
applied in the non-super situation.
Important is the fact that this Theorem \ref{thm:affinity_integral} ensures 
without the assumptions (1$^{\circ}$), (2$^{\circ}$)
the conclusions that $X\tilde{\tilde{/}}G$ is an affine 
super-scheme, and $X \to X\tilde{\tilde{/}}G$ is a $G$-super-torsor. 
\end{rem}

Recall from \eqref{eq:X/G} (ii) or \eqref{eq:BCA} 
that under the assumptions (1$^{\circ}$), (2$^{\circ}$), $X$ is of a restricted form. Therefore,
one may conclude that our Theorem \ref{thm:affinity_integral} is rather of theoretical importance; it is expected
to bring some applications to some subject related to super-torsors. 
Highly expected is to play a role in generalizing
Picard-Vessiot Theory to the super context.

The preprint \cite{MOe} contains another result, Theorem 1.3, proved
under the affinity assumption, which is
reformulated, to state it briefly, as follows. 

\begin{theorem}
Suppose that $k$ is as in {Theorem \ref{thm:MOe}}, and let $G$ be a smooth affine algebraic super-group. 
Given a Noetherian smooth affine super-scheme $Y$, every $G$-super-torsor $X \to Y$ 
arises 
from an ordinary $G_{\mathsf{ev}}$-torsor $E \to Y_{\mathsf{ev}}$, so that
\begin{equation}\label{eq:XYE}
X=(Y \times_{Y_{\mathsf{ev}}} E)\times^{G_{\mathsf{ev}}}G.
\end{equation}
\end{theorem}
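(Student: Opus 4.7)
The natural candidate is $E := X_{\mathsf{ev}}$. The plan is to verify the two hypotheses of Theorem \ref{thm:MOe} for the given $G$-action on $X$; since $Y = X\tilde{/}G$ by the torsor hypothesis, conclusions (i) and (ii) of that theorem then directly deliver the asserted isomorphism.

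To check assumption (a), I would argue that $X$ is Noetherian and smooth. Pulling the torsor $X \to Y$ back along itself yields the trivial torsor $X \times G \to X$, which is smooth as a morphism since $G$ is; by fpqc descent of smoothness, $X \to Y$ is smooth, and combined with smoothness of $Y$ this gives $X$ smooth. Noetherianity of $X$ is immediate since $X \to Y$ is of finite type ($G$ being algebraic) and $Y$ is Noetherian.

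For assumption (b), I would identify the dur sheaf $X_{\mathsf{ev}} \tilde{\tilde{/}} G_{\mathsf{ev}}$ with $Y_{\mathsf{ev}}$, which is affine as the closed super-subscheme of $Y$ cut out by the odd ideal of $\mathscr{O}(Y)$. The torsor hypothesis makes the map $\beta$ of \eqref{eq:beta} bijective and renders $\mathscr{O}(Y) \hookrightarrow \mathscr{O}(X)$ an fpqc covering. Quotienting by the odd ideals on both sides yields the analogous bijection and inclusion for $X_{\mathsf{ev}}$, $Y_{\mathsf{ev}}$, $G_{\mathsf{ev}}$; faithful flatness is preserved thanks to the Noetherian-smooth splittings $\mathscr{O}(X) \simeq \wedge(P) \otimes \mathscr{O}(X_{\mathsf{ev}})$ and $\mathscr{O}(Y) \simeq \wedge(Q) \otimes \mathscr{O}(Y_{\mathsf{ev}})$ afforded by Theorem~A.2 of \cite{MZ2}. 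The implication (c) $\Rightarrow$ (a) of Theorem \ref{thm:affinity}(1) then gives $X_{\mathsf{ev}} \tilde{\tilde{/}} G_{\mathsf{ev}} = Y_{\mathsf{ev}}$.

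With both hypotheses of Theorem \ref{thm:MOe} established, its conclusion (i) gives $Y_{\mathsf{ev}} = X_{\mathsf{ev}} \tilde{/} G_{\mathsf{ev}}$, which confirms that $E := X_{\mathsf{ev}} \to Y_{\mathsf{ev}}$ is a purely even $G_{\mathsf{ev}}$-torsor, and conclusion (ii) reads $X \simeq (Y \times_{Y_{\mathsf{ev}}} E) \times^{G_{\mathsf{ev}}} G$ as $G$-equivariant super-schemes, which is exactly what the theorem demands. I expect the main obstacle to be the descent step for assumption (b): pushing the torsor condition and faithful flatness through the even-part functor -- and hence through the interaction of $\otimes_{\mathscr{O}(Y)}$ with modding out odd ideals -- is delicate, and it is precisely there that the smoothness hypotheses on $Y$ and $G$, which furnish the exterior-algebra splittings above, do the essential work.
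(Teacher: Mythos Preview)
The paper does not prove this theorem: it is quoted from the forthcoming paper \cite{MOe} and stated here without proof. So there is no ``paper's own proof'' against which to compare your argument.

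More importantly, your strategy is circular. You invoke Theorem~\ref{thm:MOe} to derive the present statement, but the paper explicitly says the opposite dependency holds: the sentence introducing the theorem you are trying to prove reads ``The forthcoming paper contains the following result, as well, \emph{which is indeed used to prove Theorem~\ref{thm:MOe}}.'' Thus in \cite{MOe} the present result is an input to Theorem~\ref{thm:MOe}, not a consequence of it. Any argument that takes Theorem~\ref{thm:MOe} as a black box and deduces the present theorem from it would, in the logical structure of \cite{MOe}, be assuming what it sets out to prove.

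Setting circularity aside, your verification of hypothesis~(b) also does more than is required and is the most fragile part: you attempt to identify $X_{\mathsf{ev}}\tilde{\tilde{/}}G_{\mathsf{ev}}$ with $Y_{\mathsf{ev}}$ directly, which is essentially conclusion~(i) of Theorem~\ref{thm:MOe} rather than merely its hypothesis that the quotient be affine. The step ``quotienting by the odd ideals on both sides yields the analogous bijection and inclusion'' hides the identification of $(\cO(X)/(\cO(X)_1))^{G_{\mathsf{ev}}}$ with $\cO(Y)/(\cO(Y)_1)$, which is not automatic and is exactly what the smoothness and exterior-algebra splittings are meant to supply; you flag this yourself, but the details are not carried out.
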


Here is a simple example which shows that for this result, the smoothness assumption 
for $G$ is indeed needed 
in $\operatorname{char}k>2$. 

\begin{example}\label{ex:alpha}
Suppose $p=\operatorname{char} k>2$, choose an integer $r>0$ and let $q=p^r$. Let $G=\alpha_q$
be the affine group of elements whose $q$-th powers are zero; it is
represented by the Hopf algebra $A=k[t]/(t^q)$ in which $t$ is primitive. 
Obviously, this $G$ is not smooth, while it, being finite, has an integral. 
Choose arbitrarily a super-algebra $C\ne 0$ and an even element $\tau \in C_0$.  
Let $Y$ denote the affine super-scheme represented by $C$. 
Define a super-algebra over $C$ by
\[
B_\tau:=C[T]/(T^q-\tau),\quad |T|=0
\]
and let $X_\tau$ denote the affine super-scheme represented by this $B_\tau$. 
We can make $B_\tau$ into a right $A$-super-comodule algebra by
\[ 
\rho: B_\tau \to B_\tau \otimes A,\quad \rho(T)=T\otimes 1+1 \otimes t, 
\]
so that 
the associated action by $G$ on $X_\tau$ is 
strongly free, and $X_\tau\tilde{\tilde{/}}G=Y$, 
whence $X_{\tau} \to Y$ is a $G$-super-torsor. 
Assume $C/(C_1)=k$, and choose $\tau$ so as
\begin{equation*}\label{eq:tau}
\text{(i)}\ \tau \equiv 0 \operatorname{mod}(C_1),\quad \text{(ii)}~~\tau\ne c^q~~\text{for any}~~c\in C_0. 
\end{equation*}
Then $X_\tau$ is not of the form \eqref{eq:XYE}; by (i), this means that
$B_{\tau}$ is not isomorphic to $C\otimes A$ as a right $A$-super-comodule
algebra over $C$. In fact, if there were an isomorphism $\theta : C\otimes A \to B_{\tau}$, then we would have $T-\theta(1\otimes T)=c$ for some $c\in C_0$, so that
$\tau=T^q-\theta(1\otimes T)^q=c^q$, which contradicts (ii).

For example,
if $C$ is the exterior algebra
$\wedge(W)$ on a finite-dimensional vector space $W$ of dimension $>1$, then $C/(C_1)=k$,
$Y$ is Noetherian and smooth, and one can choose any non-zero element in 
$\bigoplus_{i>0}\wedge^{2i}(W)$
as the $\tau$ which satisfies (i), (ii).
\end{example}

\section*{Acknowledgments}
The first- and the second-named authors were supported by JSPS~KAKENHI, Grant Numbers 17K05189 and 19K14517, respectively.


\begin{thebibliography}{99}

\bibitem{DNR} 
S.~D\u{a}sc\u{a}lescu, C.~N\u{a}st\u{a}suscu, \c{S}.~Raianu,
\emph{Hopf Algebras: an Introduction},
Marcel Dekker, New York/Basel, 2001.

\bibitem{DG} M.~Demazure,\ P.~Gabriel,\ 
\emph{Groupes Alg\'{e}briques I},\
North-Holland, Paris/Amsterdam, 1970.

\bibitem{FG} R.~Fioresi,\ F.~Gavarini,\
\emph{Chevalley supergroups},\
Memoirs Amer. Math. Soc. \textbf{215}, no.1014, Amer. Math. Soc., Providence, 2012.

\bibitem{GW} U.~G\"{o}rtz, T.~Wedhorn,\
\emph{Algebraic Geometry I, Schemes with Examples and Exercises}, Wiesbaden,
Vieweg$+$Teubner, 2010. 

\bibitem{GZ} 
A.~N.~Grishkov,\ A.~N.~Zubkov,\ 
{\em Solvable, reductive and quasireductive supergroups},\ 
J. Algebra \textbf{452} (2016), 448--473. 

\bibitem{J} J.~Jantzen,\
\emph{Representations of algebraic groups},\
Academic Press, New York, 1987.

\bibitem{KT} H.~F.~Kreimer and M.~Takeuchi,\
\emph{Hopf algebras and Galois extensions of an algebra},\ 
Indiana\ Univ.\ Math.\ J. \textbf{30} (1981), 675--692. 

\bibitem{M1} A.~Masuoka,\
\emph{The fundamental correspondences in super affine groups and
super formal groups},\
J.\ Pure\ Appl.\ Algebra \textbf{202} (2005), 284--312.

 
\bibitem{M2} A.~Masuoka,\  
{\em Harish-Chandra pairs for algebraic affine super-group schemes over an arbitrary field},\ 
Transform. Groups \textbf{17} (2012), no.~4,\ 1085--1121.


\bibitem{M3} A.~Masuoka,\
\emph{Hopf algebraic techniques applied to super algebraic groups},\ 
Proceedings of Algebra Symposium (Hiroshima, 2013), pp. 48--66, Math. Soc. Japan, 2013;
available at: $\mathsf{arXiv}:$ 1311.1261v2 

\bibitem{MOe} A.~Masuoka,\ T.~Oe, 
\emph{Torsors in super-symmetry}, preprint, $\mathsf{arXiv}:$ 2101.03461


\bibitem{MO} A.~Masuoka,\ T.~Oka,\ 
\emph{Unipotent algebraic affine supergroups and nilpotent Lie superalgebras},\  
Algebr. Represent. Theory \textbf{8} (2005) 397--413.

\bibitem{MS1} A.~Masuoka,\ T.~Shibata,\ 
\emph{Algebraic super-groups and Harish-Chandra pairs over a commutative ring},\
Trans. Amer. Math. Soc. {\bf 369} (2017), 3443--3481.


\bibitem{MS2} A.~Masuoka,\ T.~Shibata,\ 
\emph{On functor points of affine super-groups},\
J. Algebra {\bf 503} (2018), 534--572.


\bibitem{MT} A.~Masuoka,\ Y.~Takahashi,\ 
\emph{Geometric construction of quotients $G/H$ in supersymmetry},\
Transform.\ Groups \textbf{26} (2021), 347--375.

\bibitem{MZ1} A.~Masuoka,\ A.~N.~Zubkov,\ 
\emph{Quotient sheaves of algebraic super-groups are superschemes},\ 
J. Algebra \textbf{348} (2011), 135--170.

\bibitem{MZ2} A.~Masuoka,\ A.~N.~Zubkov,\ 
\emph{Solvability and nilpotency for algebraic supergroups},\
J.\ Pure Appl. Algebra \textbf{221} (2017), 339--365.

\bibitem{MFK}
D.~Mumford,\ J.~Fogarty,\ F.~Kirwan,\
\emph{Geometric Invariant Theory},\
Third Enlarged Edition, Ergebnisse der Mathematik und ihrer Grenzgebiete 34, Springer-Verlag, Berlin/Heidelberg, 1994. 

\bibitem{O}
U.~Oberst,
\emph{Affine Quotientenschemata nach affine, algebraischen Gruppen und induzierte Darstellungen},\
J.\ Algebra {\bf 44} (1977), 503--538.


\bibitem{R}
D.~Radford,\
\emph{Hopf algebras with projection},
J.~Algebra \textbf{92} (1985), 322--347.


\bibitem{SS} P.~Schauenburg,\ H.-J.~Schneider,\ 
\emph{On generalized Hopf galois extensions},\
J.\ Pure.\ Appl.\ Algebra\ \textbf{202} (2005), 168--194.

\bibitem{SZ0} M.~Scheunert,\ R.~B.~Zhang,\ 
\emph{Invariant integration on classical and quantum Lie supergroups},\
J.\ Math.\ Phys. {\bf 42} (2001), 3871--3897. 


\bibitem{SZ} M.~Scheunert,\ R.~B.~Zhang,\ 
\emph{Integration on Lie super-groups: a Hopf superalgebra approach},\
J.\ Algebra\ \textbf{292} (2005), 324--342.

\bibitem{Se} V.~Serganova,\
\emph{Quasireductive supergroups},\
in: \emph{New developments in Lie theory and its applications},\
Contemp. Math. \textbf{544}, Amer. Math. Soc., Providence, RI, 2011, pp.~141--159. 

\bibitem{Sh} T.~Shibata,\
\emph{Borel-Weil theorem for algebraic supergroups},\ 
J.\ Algebra {\bf 547} (2020),179--219.


\bibitem{Su1} J.~B.~Sullivan,\ 
\emph{The uniqueness of integrals for Hopf algebras and
some existence theorems of integrals for commutative Hopf algebras},\
J.\ Algebra\ \textbf{19} (1971), 426--440.

\bibitem{Su2} J.~B.~Sullivan,\ 
\emph{Affine group schemes with integrals},\
J.\ Algebra\ \textbf{22} (1972), 546--558.

\bibitem{Weissauer} R.~Weissauer,\  
\emph{Semisimple algebraic tensor categories},\
$\mathsf{arXiv}$:~0909.1793.

\bibitem{Z1} A.~N.~Zubkov,\ 
{\em Affine quotients of super-groups},\ 
Transform. Groups \textbf{14} (2009), no~3,\ 713--745.

\bibitem{Z2} A.~N.~Zubkov,\ 
{\em On quotients of affine superschemes over finite super-groups},\  
J. Algebra Appl. \textbf{10} (2011), no.~3,\ 391--408.

\bibitem{ZM} 
A.~N.~Zubkov,\ F.~Marko,\ 
{\em The center of $Dist(GL(m|n))$ in positive characteristic},\
Algebr. Represent. Theory \textbf{19} (2016), no. 3, 613--639.

\end{thebibliography}
\end{document}